\documentclass{amsart}
\usepackage{fancyhdr}
\usepackage{amsthm}
\usepackage{amssymb}
\usepackage{amscd}
\usepackage{amsmath, pb-diagram}
\usepackage[all]{xy}
\usepackage{setspace}
\usepackage{graphics,epsfig}
\usepackage{graphicx}
\usepackage{caption}
\usepackage{float}
\usepackage{array}
\usepackage[english]{babel}
\usepackage[mathscr]{euscript}
\usepackage{mathtools}
\usepackage{url}
\usepackage{enumitem}
\usepackage{hyperref}
\usepackage{tikz}
\usetikzlibrary{arrows.meta}

\captionsetup[figure]{name={Fig.},labelsep=period}

\hypersetup{colorlinks=true, allcolors=blue}

\allowdisplaybreaks

%\date{\today}

\DeclareMathOperator{\lcm}{lcm}

\newcommand{\tp}{\textbf{p}}
\newcommand{\ttp}{\textbf{\emph{p}}}
\newcommand{\LL}{\mathscr{L}}
\newcommand{\squad}{\hspace{0.5em}}

\theoremstyle{plain}
\newtheorem{theorem}{Theorem}[section]
\newtheorem{corollary}[theorem]{Corollary}
\newtheorem{lemma}[theorem]{Lemma}
\newtheorem{proposition}[theorem]{Proposition}
\newtheorem{problem}[theorem]{Problem}

\theoremstyle{definition}
\newtheorem{defn}[theorem]{Definition}
\newtheorem{example}[theorem]{Example}
\newtheorem{remark}[theorem]{Remark}
\newtheorem{algorithm}[theorem]{Algorithm}
\newtheorem{question}[theorem]{Question}

\title{Basis condition for generalized spline modules}

\author[S. F\.{I}\c{s}ekc\.{I}]{Seher F\.{I}\c{s}ekc\.{I}}
\address{TED University}
\email{seher.fisekci@tedu.edu.tr}

\author[S. Sar{\i}o\u{g}lan]{Samet Sar{\i}o\u{g}lan}
\address{Hacettepe University}
\email[Corresponding author.]{ssarioglan@hacettepe.edu.tr}

\subjclass[2010]{05C78, 15A15}
\keywords{generalized spline, module, basis} 

\begin{document}
\begin{abstract}
A generalized spline on an edge labeled graph $(G,\alpha)$ is defined as a vertex labeling, such that the difference of labels on adjacent vertices lies in the ideal generated by the edge label.  We study generalized splines over greatest common divisor domains and present a determinantal basis condition for generalized spline modules on arbitrary graphs. The main result of the paper answers a conjecture that appeared in several papers.
\end{abstract}

\maketitle

%%%%%%%%%%%%%%%%%%%%%%%%%%%%%%%%%%%%%%%%%%%%%%%%%%%%%%%%%
%%%%%%%%%% Introduction Section
%%%%%%%%%%%%%%%%%%%%%%%%%%%%%%%%%%%%%%%%%%%%%%%%%%%%%%%%%

\section{Introduction}\label{intro}

Splines are known as piecewise polynomial functions defined on faces of polyhedral complexes with a smoothness condition.  They control the curvature of a surface in engineering and industry.  They also appear in numerical analysis, optimization theory, computer-aided design and modeling, and solutions of differential equations.

The notion of a generalized spline is introduced by Gilbert, Viel, and Tymoczko \cite{GPT}. As a generalization of classical splines, generalized splines are defined on edge labeled graphs over arbitrary rings instead of polyhedral complexes over polynomial rings. The set of all generalized splines on an edge labeled graph over a base ring $R$ has a ring and an $R$-module structure~\cite{GPT}. Module structure of $R_{(G,\alpha)}$ has been studied chiefly in terms of freeness, minimum generating sets, and bases~\cite{HMR, BT, BHKR, PSTW, Alt3, AD, AMT, DM, RS}. Furthermore, generalized splines have also been viewed in terms of homological algebraic methods and with combinatorial, graph-theoretic approaches~\cite{Dip, And, Alt5}.

In this paper, we focus on the following question about generalized spline modules over greatest common divisor (GCD) domains. A GCD domain $R$ is an integral domain such that any two elements of $R$ have a greatest common divisor.

\begin{question} When the base ring $R$ is a GCD domain, under what conditions does a given set of generalized splines form an $R$-module basis for generalized spline modules?
\label{1que1}
\end{question}

Question~\ref{1que1} has been investigated in many papers using determinant-based methods for particular types of graphs and base rings. Gjoni studied generalized splines over integers and gave a basis criterion on cycles~\cite{Gjo}. A similar technique is used for generalized splines over integers on diamond graphs~\cite{Mah,Bla}. Alt{\i}nok and Sar{\i}o\u{g}lan extended these results to GCD domains for cycles, diamond graphs, and trees~\cite{Alt2}. Calta and Rose presented a basis criterion for $R_{(G,\alpha)}$ on arbitrary graphs over GCD domains under certain conditions on edge labels~\cite{RC}.

This paper aims to present a determinantal basis condition for generalized spline modules on arbitrary graphs over GCD domains. The final result of the paper, Theorem~\ref{5thm1}, was stated as a conjecture in two papers~\cite{Alt2, RC}. We fill this gap in the literature by proving it and finalize the problem. In Section~\ref{pre}, we give fundamental definitions and fix notations. We support these new notations with an example. In Section~\ref{dm}, we explain the determinantal methods. We prove Theorem~\ref{3thm1} for arbitrary graphs, which is stated for special types of graphs in previous works. In Section~\ref{bcocg}, we focus on generalized splines on complete graphs over GCD domains and introduce an algorithm that produces special generalized splines. We illustrate the algorithm with an example. We then present a basis condition that answers Question~\ref{1que1} for complete graphs. In Section~\ref{bcoag}, we extend our outcomes to arbitrary graphs. We first define the completion of an edge labeled graph, then clarify Question~\ref{1que1} for arbitrary graphs.

%%%%%%%%%%%%%%%%%%%%%%%%%%%%%%%%%%%%%%%%%%%%%%%%%%%%%%%%%
%%%%%%%%%% Preliminaries Section
%%%%%%%%%%%%%%%%%%%%%%%%%%%%%%%%%%%%%%%%%%%%%%%%%%%%%%%%%

\section{Preliminaries}\label{pre}

We begin with the definition of a generalized spline.

\begin{defn} Given a finite graph $G=(V, E)$ with $n$ vertices and a commutative ring $R$ with identity, an edge labeling on $G$ is defined as a function $\alpha: E \to I(R)$ where $I(R)$ is the set of ideals of $R$.  A generalized spline on $(G,\alpha)$ is a vertex labeling $F = \big( F(v_1) , F(v_2) , \ldots , F(v_n) \big) \in R^n$ such that for each edge $uv$, the difference $F(u) - F(v) \in \alpha(uv)$.
\end{defn}

In order to specify the graph itself, we use $V(G)$ and $E(G)$ for the vertex set and edge set of $G$.

Let $R_{(G,\alpha)}$ denote the set of all generalized splines on $(G,\alpha)$ with base ring $R$.  Then, $R_{(G,\alpha)}$ has a ring and an $R$-module structure by componentwise addition and multiplication by the elements of $R$.  From now on, we refer to generalized splines as splines. We use principal ideals for edge labels and denote each edge label by the generator of the ideal. We use the notations $e_j$ or $uv$ for an edge of $G$, and symbolize the corresponding edge label by $\alpha(e_j) = l_j$ and $\alpha(uv) = l_{uv}$. We also refer to generators of edge labels as edge labels. Figure~\ref{fig1} illustrates an edge labeled diamond graph with base ring $\mathbb{Z}$.

\begin{figure}[H]
\begin{center}
\begin{tikzpicture}
\begin{scope}[every node/.style={circle,thick,draw}]
    \node (A) at (1.5,3) {$v_1$};
    \node (B) at (1.5,0) {$v_2$};
    \node (C) at (3,1.5) {$v_3$};
    \node (D) at (0,1.5) {$v_4$};
\end{scope}
\begin{scope}[>={Stealth[black]},
              every node/.style={fill=white,circle},
              every edge/.style={draw=black,very thick}]
    \path (A) edge node {$5$} (B);
    \path (A) edge node {$4$} (C);
    \path (A) edge node {$6$} (D);          
    \path (B) edge node {$2$} (C);
    \path (B) edge node {$9$} (D);
\end{scope}
\end{tikzpicture}
\caption{Edge labeled diamond graph}
\label{fig1}
\end{center}
\end{figure}
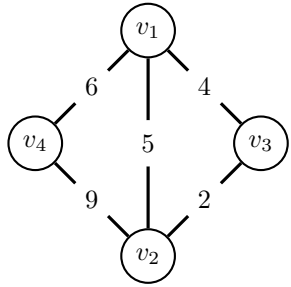

A spline on Figure~\ref{fig1} can be given by $F = (2,32,34,50)$. 

This paper focuses on splines over GCD domains. We shorten the notation $\gcd(a,b)$ and $\lcm(a,b)$ as $(a,b)$ and $[a,b]$. We denote the greatest common divisor and least common multiple of elements of a set $A$ by $\big( A \big)$ and $\big[ A \big]$. Throughout the paper, we assume that $R$ is a GCD domain unless otherwise stated.

Given non-empty subsets $A_1 , A_2 , \ldots , A_k$ of a GCD domain, we define the product by
\begin{displaymath}
A_1 \cdot A_2 \cdots A_k = \{ a_1 \cdot a_2 \cdots a_k \text{ $\vert$ } a_i \in A_i, 1 \leq i \leq k\}.
\end{displaymath}
The following lemma can be proved by combining elements and using the greatest common divisor and least common multiple properties.

\begin{lemma} Let $A_1 , A_2 , \ldots , A_k$ be non-empty subsets of a GCD domain. Then,
\begin{enumerate}[label=\emph{(\alph*)}]
	\item $\big( A_1 \cdot A_2 \cdots A_k \big) = \big( A_1 \big) \cdot \big( A_2 \big) \cdots \big( A_k \big)$.
	\item $\big[ A_1 \cdot A_2 \cdots A_k \big] = \big[ A_1 \big] \cdot \big[ A_2 \big] \cdots \big[ A_k \big]$.	
\end{enumerate}
\label{2lem1}
\end{lemma}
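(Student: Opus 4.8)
The plan is to peel the statement down to the two-factor case $k=2$ and then obtain that case from one auxiliary observation: in a GCD domain, both $\gcd$ and $\lcm$ commute with multiplication by a fixed element.

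First I would dispose of the reduction to $k=2$. Straight from the definition of the product of subsets one has the set identity $A_1 \cdots A_k = (A_1 \cdots A_{k-1}) \cdot A_k$, so an induction on $k$ reduces both (a) and (b) to two factors: apply the two-factor statement with $A_1 \cdots A_{k-1}$ and $A_k$ in the roles of $A_1$ and $A_2$, then invoke the inductive hypothesis on $A_1 \cdots A_{k-1}$. Here, and below, I would tacitly use that the gcd and lcm of the sets in question exist, which causes no trouble for finite subsets -- the only case that occurs in the paper.

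Next I would isolate the scaling lemma: for $x \in R$ and non-empty $S \subseteq R$, one has $\big( xS \big) = x \big( S \big)$ and $\big[ xS \big] = x \big[ S \big]$, where $xS = \{ xs \mid s \in S \}$. For the gcd part (assume $x \neq 0$, the case $x=0$ being trivial), note that $x$ divides every element of $xS$, hence $x \mid \big( xS \big)$; writing $\big( xS \big) = xh$ and cancelling $x$ -- this is where the integral-domain hypothesis enters -- shows that $h$ divides every $s \in S$, so $h \mid \big( S \big)$ and thus $\big( xS \big) \mid x\big( S \big)$; the reverse divisibility is immediate since $x\big(S\big)$ divides each $xs$. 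The lcm part is dual. With the scaling lemma in hand the case $k=2$ is quick. For (a), put $d = \big( A \big)$ and $e = \big( B \big)$: since $d \mid a$ and $e \mid b$ for all $a \in A$, $b \in B$, the element $de$ is a common divisor of $A \cdot B$; conversely, for any common divisor $c$ of $A \cdot B$, fixing $b \in B$ gives $c \mid \big( \{ ab \mid a \in A \} \big) = bd$ by the scaling lemma, and letting $b$ vary gives $c \mid \big( \{ bd \mid b \in B \} \big) = de$, again by the scaling lemma. Part (b) is the same argument with every divisibility reversed and $\gcd$ replaced by $\lcm$: with $m = \big[ A \big]$ and $n = \big[ B \big]$, the product $mn$ is a common multiple of $A \cdot B$, and any common multiple $\ell$ satisfies $bm = \big[ \{ ab \mid a \in A \} \big] \mid \ell$ for each $b \in B$, hence $mn = \big[ \{ bm \mid b \in B \} \big] \mid \ell$.

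I expect no genuine obstacle here, as this is essentially a bookkeeping lemma; the one spot demanding attention is the scaling lemma, where cancellation in the integral domain is exactly what makes the divisibility argument go through (it would fail over an arbitrary commutative ring). A secondary point worth a brief remark in the write-up is the existence of the gcd's and lcm's of the subsets involved: restricting to finite $A_i$, or simply assuming these exist, removes any ambiguity and covers every use of the lemma in the sequel.
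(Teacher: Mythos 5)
Your proof is correct and fills in precisely the argument the paper leaves unwritten: the paper offers no proof of this lemma beyond the remark that it follows by ``combining elements and using the greatest common divisor and least common multiple properties,'' and your reduction to two factors together with the scaling identities $\big( xS \big) = x \big( S \big)$ and $\big[ xS \big] = x \big[ S \big]$ is the standard way to carry that out. The two points you flag --- cancellation in the integral domain and existence of the gcd/lcm for the (finite) sets that actually occur in the paper --- are exactly the ones worth making explicit.
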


A special type of splines, called flow-up class, is defined as follows.

\begin{defn} Given an edge labeled graph $(G,\alpha)$, an $i$-th flow-up class is defined as a spline $F^{(i)} \in R_{(G,\alpha)}$ such that $F^{(i)} (v_j) = 0$ for $j < i$ and $F^{(i)} (v_i) \neq 0$. 
\end{defn}

For instance, the flow-up classes in Figure~\ref{fig1} can be listed as $F^{(1)} = (1,1,1,1)$, $F^{(2)} = (0,30,0,48)$, $F^{(3)} = (0,0,8,0)$ and $F^{(4)} = (0,0,0,36)$. 

Let $(G,\alpha)$ be an edge labeled graph and $v_i , v_j \in V(G)$. A $v_j$-trail of $v_i$, denoted by $\tp ^{(v_i,v_j)}$, is a sequence of vertices and edges starting at $v_i$ and ending at $v_j$ such that no edge is repeated. We represent a trail by the sequence of the edge labels on it. The length of a trail, denoted by $\big\vert \tp ^{(v_i,v_j)} \big\vert$, is the number of edges it contains. We symbolize the greatest common divisor of edge labels on $\tp ^{(v_i , v_j)}$ by $\big( \tp ^{(v_i , v_j)} \big)$.

Given an edge labeled graph $(G,\alpha)$, fix a vertex $v_i$ with $i > 1$ and let the vertices $v_j$ with $j < i$ be labeled by zero. In this case, $\tp ^{(v_i,v_j)}$ is called a zero trail of $v_i$. Under this setting, note that each zero trail of $v_i$ corresponds to a $v_j$-trail of $v_i$ with $j < i$. An arbitrary zero trail of $v_i$ is denoted by $\tp ^{(v_i , 0)}$. A zero trail with length $1$ is called a zero edge. Let the zero trails of $v_i$ be given by the set $\Big\{ \tp_t ^{(v_i , 0)} \text{ $\vert$ } t=1,2,\ldots,k \Big\}$. We define the element $\LL_i \in R$ as
\begin{displaymath}
\LL_i = \Big[ \big( \tp_1 ^{(v_i , 0)} \big) , \big( \tp_2 ^{(v_i , 0)} \big) , \ldots , \big( \tp_k ^{(v_i , 0)} \big) \Big]
\end{displaymath}
and we set $\LL_1 = 1$. Over principal ideal domains, there exist flow-up classes $F^{(i)} \in R_{(G,\alpha)}$ for each $i$ with $F^{(i)} (v_i) = \LL_i$, and such flow-up classes form an $R$-module basis for $R_{(G,\alpha)}$~\cite{Alt1}. However, such flow-up classes may not exist if $R$ is not a principal ideal domain.

Our first observation on trails is given below.

\begin{theorem} Let $v_i , v_j \in V(G)$ and $\ttp ^{(v_i,v_j)}$ be a $v_j$-trail of $v_i$. If $F \in R_{(G,\alpha)}$, then $\big( \ttp ^{(v_i,v_j)} \big)$ divides $F(v_i) - F(v_j)$.
\label{2thm1}
\end{theorem}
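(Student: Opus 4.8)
The plan is to prove this by induction on the length $\big\vert \ttp^{(v_i,v_j)} \big\vert$ of the trail. For the base case, suppose the trail has length $1$, i.e., it consists of the single edge $v_iv_j$. Then $\big( \ttp^{(v_i,v_j)} \big) = l_{v_iv_j}$, and since $F$ is a spline, the defining condition gives $F(v_i) - F(v_j) \in \alpha(v_iv_j) = (l_{v_iv_j})$, which is exactly the statement that $l_{v_iv_j}$ divides $F(v_i) - F(v_j)$.

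For the inductive step, suppose the result holds for all trails of length less than $m$, and let $\ttp^{(v_i,v_j)}$ be a trail of length $m \geq 2$. Write the trail as $v_i = u_0, u_1, \ldots, u_m = v_j$ with edge labels $l_{u_0u_1}, l_{u_1u_2}, \ldots, l_{u_{m-1}u_m}$. The tail $v_i = u_0, u_1, \ldots, u_{m-1}$ is a $u_{m-1}$-trail of $v_i$ of length $m-1$, so by the inductive hypothesis $\big( \ttp^{(u_0,u_{m-1})} \big)$ divides $F(u_0) - F(u_{m-1})$; also the final edge gives that $l_{u_{m-1}u_m}$ divides $F(u_{m-1}) - F(u_m)$. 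Adding these two telescoping differences, $F(u_0) - F(u_m) = \big( F(u_0) - F(u_{m-1}) \big) + \big( F(u_{m-1}) - F(u_m) \big)$, so any common divisor of the two pieces divides $F(v_i) - F(v_j)$. In particular, $d := \big( \big( \ttp^{(u_0,u_{m-1})} \big), l_{u_{m-1}u_m} \big)$ divides $F(v_i) - F(v_j)$. Finally, by Lemma~\ref{2lem1}(a) — or simply by associativity of $\gcd$ in a GCD domain — $d$ is precisely $\big( \ttp^{(v_i,v_j)} \big)$, the gcd of all edge labels along the full trail, since the set of edge labels on $\ttp^{(v_i,v_j)}$ is the union of the labels on the tail trail together with the last label $l_{u_{m-1}u_m}$. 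This completes the induction.

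The main technical point to be careful about is that the argument never actually uses the trail condition (no repeated edges); the same telescoping works for any walk. But stating it for trails is harmless, and the natural decomposition of a trail into a shorter trail plus one edge is exactly what the induction needs, so no obstacle arises there. The only genuine subtlety is ensuring the gcd bookkeeping is valid in a general GCD domain — associativity and the identity $\gcd(\gcd(A),b) = \gcd(A \cup \{b\})$ hold in any GCD domain, which is the content we borrow from the gcd/lcm formalism set up before Lemma~\ref{2lem1} — so I would invoke that explicitly rather than argue elementwise.
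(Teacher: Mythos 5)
Your proof is correct and uses the same underlying mechanism as the paper's: the telescoping sum $F(v_i)-F(v_j)=\sum_k \big(F(u_{k-1})-F(u_k)\big)$ together with the fact that the gcd of the edge labels divides each summand. The paper simply writes out the full telescoping sum directly rather than packaging it as an induction, so the two arguments are essentially identical.
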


\begin{proof} Assume that $\tp ^{(v_i,v_j)} = < l_{v_i u_1} , l_{u_1 u_2} , \ldots , l_{u_{k-1} u_k} , l_{u_k v_j} >$. Since $F \in R_{(G,\alpha)}$, we have
\begin{align*}
F(v_i) - F(u_1) &= r_1 l_{v_i u_1} \\
F(u_1) - F(u_2) &= r_2 l_{u_1 u_2} \\
&\vdots \\
F(u_{k-1}) - F(u_k) &= r_k l_{u_{k-1} u_k} \\
F(u_k) - F(v_j) &= r_{k+1} l_{u_k v_j}
\end{align*}
for some $r_1 , r_2 , \ldots , r_{k+1} \in R$. Hence, we write
\begin{displaymath}
F(v_i) - F(v_j) = r_1 l_{v_i u_1} + r_2 l_{u_1 u_2} + \ldots + r_k l_{u_{k-1} u_k} + r_{k+1} l_{u_k v_j}.
\end{displaymath}
The greatest common divisor $\big( \tp ^{(v_i,v_j)} \big)$ divides each summand, and thus, it divides $F(v_i) - F(v_j)$.
\end{proof}

We can simplify the computation of $\LL_i$ as below.

\begin{remark} Let $\tp_j ^{(v_i , 0)}$ and $\tp_k ^{(v_i , 0)}$ be two zero trails of $v_i$ with $\tp_j ^{(v_i , 0)} \subset \tp_k ^{(v_i , 0)}$. In this case, it is sufficient to consider just $\tp_j ^{(v_i , 0)}$ to compute $\LL_i$ since $\big( \tp_k ^{(v_i , 0)} \big)$ divides $\big( \tp_j ^{(v_i , 0)} \big)$. Therefore, we consider only zero trails of $v_i$ that do not contain any other.
\label{2rem1}
\end{remark}

Let $\tp_t ^{(v_i , 0)} = < l_{t_1}, l_{t_2}, \ldots, l_{t_s} >$ be a zero trail of $v_i$ with $t_s \neq 1$. We write $l_{t_j} = \big( \tp_t ^{(v_i , 0)} \big) \cdot {l_{t_j} ^{(t)}} '$ for $1 \leq j \leq s$ where ${l_{t_j} ^{(t)}}' \in R$ such that $\Big( {l_{t_1} ^{(t)}}' , {l_{t_2} ^{(t)}}' , \ldots, {l_{t_s} ^{(t)}}'  \Big) = 1$ and use the notations 
\begin{align*}
D^i _t &= \Big\{ {l_{t_1} ^{(t)}}' , {l_{t_2} ^{(t)}}' , \ldots, {l_{t_s} ^{(t)}}' \Big\} \text{ and} \\ 
\mathfrak{D}_i &= \{ D^i _1 , D^i _2, \ldots , D^i _{k_i} \}
\end{align*}
where $k_i$ is the number of zero trails of $v_i$ with length greater than $1$. Here $\big( D^i _t \big) = 1$ for all $t$, and hence, $\big( \mathfrak{D}_i \big) = \Big( \big( D^i _1 \big) , \big( D^i _2 \big) , \ldots , \big( D^i _{k_i} \big) \Big) = 1$ for all $2 \leq i \leq n-1$. We define the following set similar to the cartesian product, whose elements are subsets instead of tuples
\begin{displaymath}
\bigtimes D^i = \Big\{  \big\{ {l_{j_1} ^{(1)}}' , {l_{j_2} ^{(2)}}' , \ldots, {l_{j_{k_i}} ^{(k_i)}}' \big\} \text{ $\vert$ } {l_{j_t} ^{(t)}}' \in D^i _t , \squad 1 \leq t \leq k_i \Big\}
\end{displaymath}
and symbolize an arbitrary element of $\bigtimes D^i$ by $a ^{(i)}$.

\begin{remark} When $i = n$, all vertices except $v_i$ are labeled by zero and each zero trail of $v_i$ contains a zero edge. In this case, we only consider zero edges by Remark~\ref{2rem1} and hence, $\mathfrak{D}_n = \emptyset$.
\label{2rem2} 
\end{remark}

For each element $a ^{(i)} = \Big\{ {l_{j_1} ^{(1)}}' , {l_{j_2} ^{(2)}}' , \ldots, {l_{j_{k_i}} ^{(k_i)}}' \Big\} \in \bigtimes D^i$, we say that the edge label $l_{j_s}$ lies in $a ^{(i)}$ for $1 \leq s \leq k_i$. An edge label from each zero trail $\tp ^{(v_i , 0)}$ with $\big\vert \tp ^{(v_i , 0)} \big\vert > 1$ is contained in $l(a ^{(i)})$ by construction. The set of all edge labels that lie in $a ^{(i)}$ is denoted by $l(a ^{(i)})$. We use the notation $\prod a ^{(i)}$ for the product of elements of $a ^{(i)}$.

A subgraph $H$ of $G$ is a graph such that $V(H) \subset V(G)$ and $E(H) \subset E(G)$. For each element $a ^{(i)} \in \bigtimes D^i$, we define the subgraph $H_{a ^{(i)}} \subset G$ with edge set $E(H_{a ^{(i)}}) = \{ e_j \text{ $\vert$} l_j \in l(a ^{(i)}) \}$. Note that $v_i v_j \not\in E(H_{a ^{(i)}})$ for $j<i$ by the definition of $a ^{(i)}$, since $< l_{v_i v_j} >$ is a zero edge of $v_i$. 

\begin{example} Consider the edge labeled $K_4$ in Figure~\ref{fig2}.

\begin{figure}[!htb]
\centering
\begin{minipage}{.5\textwidth}
\centering
\begin{tikzpicture}
\begin{scope}[every node/.style={circle,thick,draw}]
    \node (A) at (2,3.5) {$v_1$};
    \node (B) at (0,0) {$v_2$};
    \node (C) at (4,0) {$v_3$};
    \node (D) at (2,1.5) {$v_4$};
\end{scope}
\begin{scope}[>={Stealth[black]},
              every node/.style={fill=white,circle},
              every edge/.style={draw=black,very thick}]
    \path (A) edge[bend right=15] node {$l_1$} (B);
    \path (A) edge[bend left=15] node {$l_3$} (C);
    \path (A) edge node {$l_4$} (D);         
    \path (B) edge node {$l_2$} (C);
    \path (B) edge node {$l_5$} (D);
    \path (C) edge node {$l_6$} (D);
\end{scope}
\end{tikzpicture}
\caption{\small{Edge labeled $K_4$}}
\label{fig2}
\end{minipage}%
\begin{minipage}{.5\textwidth}
\centering
\begin{tikzpicture}
\begin{scope}[every node/.style={circle,thick,draw}]
    \node (A) at (2,3.5) {$v_1$};
    \node (B) at (0,0) {$v_2$};
    \node (C) at (4,0) {$v_3$};
    \node (D) at (2,1.5) {$v_4$};
\end{scope}
\begin{scope}[>={Stealth[black]},
              every node/.style={fill=white,circle},
              every edge/.style={draw=black,very thick}]
    \path (A) edge node {$l_4$} (D);         
    \path (B) edge node {$l_2$} (C);
    \path (C) edge node {$l_6$} (D);
\end{scope}
\end{tikzpicture}
\caption{\small{The subgraph $H_a$}}
\label{fig3}
\end{minipage}
\end{figure}

\noindent The zero trails of $v_2$ are determined as \squad $\tp ^{(v_2 , 0)} _1 = <l_1>$, \squad $\tp ^{(v_2 , 0)} _2 = <l_2 , l_3>$, \squad $\tp ^{(v_2 , 0)} _3 = <l_5 , l_4>$, \squad $\tp ^{(v_2 , 0)} _4 = <l_2 , l_6 , l_4>$ \squad and \squad $\tp ^{(v_2 , 0)} _5 = <l_5 , l_6 , l_3>$. Thus,
\begin{displaymath}
\mathfrak{D}_2 = \Big\{ \{ {l_2 ^{(2)}} ' , {l_3 ^{(2)}} ' \}, \quad \{ {l_5 ^{(3)}} ' , {l_4 ^{(3)}} ' \}, \quad \{ {l_2 ^{(4)}} ' , {l_6 ^{(4)}} ' , {l_4 ^{(4)}} ' \}, \quad \{ {l_5 ^{(5)}} ' , {l_6 ^{(5)}} ' , {l_3 ^{(5)}} ' \} \Big\}.
\end{displaymath}
Here $a ^{(2)} = \Big\{ {l_2 ^{(2)}} ' , {l_4 ^{(3)}} ' , {l_6 ^{(4)}} ' , {l_6 ^{(5)}} ' \Big\} \in \bigtimes D^2$ and the subgraph $H_{a ^{(2)}} \subset K_4$ is given in Figure~\ref{fig3}. Moreover, $l (a ^{(2)}) = \{ l_2 , l_4 , l_6 \}$ and $\prod a ^{(2)} = {l_2 ^{(2)}} ' \cdot {l_4 ^{(3)}} ' \cdot {l_6 ^{(4)}} ' \cdot {l_6 ^{(5)}} '$.
\label{2ex1}
\end{example}

\begin{remark} The subgraph $H_{a ^{(i)}}$ contains at least one edge from each zero trail $\tp ^{(v_i , 0)}$ with $\big\vert \tp ^{(v_i , 0)} \big\vert > 1$ by definition of the element $a ^{(i)}$.
\label{2rem3}
\end{remark}

As a result of Lemma~\ref{2lem1}, we present the following proposition.

\begin{proposition} If $\mathbb{A}$ is the set of products $\prod a ^{(2)} \cdot \prod a ^{(3)} \cdots \prod a ^{(n-1)}$ where $a ^{(i)} \in \bigtimes D^i$ are arbitrary elements for $2 \leq i \leq n-1$, then we have
\begin{center}
$\big( \mathbb{A} \big) = \Big( \big( \mathfrak{D}_2 \big) , \big( \mathfrak{D}_3 \big) , \ldots , \big( \mathfrak{D}_{n-1} \big) \Big) = 1$.
\end{center}
\label{2prop1}
\end{proposition}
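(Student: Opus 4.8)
The plan is to peel the claim apart one index $i$ at a time and then reassemble it with Lemma~\ref{2lem1}(a). For $2 \le i \le n-1$ write $\mathbb{B}_i := \big\{ \prod a^{(i)} \mid a^{(i)} \in \bigtimes D^i \big\} \subset R$. By construction $\mathbb{A}$ is precisely the product of sets $\mathbb{B}_2 \cdot \mathbb{B}_3 \cdots \mathbb{B}_{n-1}$, so Lemma~\ref{2lem1}(a) gives $\big( \mathbb{A} \big) = \big( \mathbb{B}_2 \big) \big( \mathbb{B}_3 \big) \cdots \big( \mathbb{B}_{n-1} \big)$; moreover, since $\big( \mathfrak{D}_i \big) = 1$ for each such $i$, also $\big( (\mathfrak{D}_2), (\mathfrak{D}_3), \ldots, (\mathfrak{D}_{n-1}) \big) = 1$. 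Thus everything reduces to showing $\big( \mathbb{B}_i \big) = 1$ for one fixed $i$.

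To do this I compare $\mathbb{B}_i$ with the ordinary product of sets $Q_i := D^i_1 \cdot D^i_2 \cdots D^i_{k_i}$. Because $\big( D^i_t \big) = 1$ for every $t$, Lemma~\ref{2lem1}(a) yields $\big( Q_i \big) = \big( D^i_1 \big) \big( D^i_2 \big) \cdots \big( D^i_{k_i} \big) = 1$. Now take any $b = \prod a^{(i)} \in \mathbb{B}_i$ arising from $a^{(i)} = \big\{ {l_{j_1}^{(1)}}', {l_{j_2}^{(2)}}', \ldots, {l_{j_{k_i}}^{(k_i)}}' \big\}$. Since $b$ is the product of the \emph{distinct} elements of the set $a^{(i)}$, it divides the full product ${l_{j_1}^{(1)}}' \cdot {l_{j_2}^{(2)}}' \cdots {l_{j_{k_i}}^{(k_i)}}'$, which is an element of $Q_i$ by definition of the product of sets. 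Hence $\big( \mathbb{B}_i \big)$, being a divisor of $b$, divides that element of $Q_i$; as every element of $Q_i$ is obtained in exactly this way, $\big( \mathbb{B}_i \big)$ is a common divisor of $Q_i$ and therefore divides $\big( Q_i \big) = 1$. So $\big( \mathbb{B}_i \big) = 1$, and combining with the first paragraph, $\big( \mathbb{A} \big) = \big( (\mathfrak{D}_2), \ldots, (\mathfrak{D}_{n-1}) \big) = 1$.

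I do not expect a genuinely hard step: the argument is a bookkeeping reduction to Lemma~\ref{2lem1}(a), applied once across the indices and once inside a fixed index. The one point that needs care is that each $a^{(i)}$ is a \emph{set}, so $\prod a^{(i)}$ may have fewer than $k_i$ factors; passing through the tuple-product set $Q_i$ rather than working with $\mathbb{B}_i$ directly is what makes Lemma~\ref{2lem1}(a) applicable, and the divisibility $b \mid {l_{j_1}^{(1)}}' \cdots {l_{j_{k_i}}^{(k_i)}}'$ is the bridge between the two. (If instead one reads $\prod a^{(i)}$ as the product over $t = 1, \ldots, k_i$ counted with multiplicity, then $\mathbb{B}_i = Q_i$ and this comparison is vacuous.) Degenerate cases are harmless: if $v_i$ has no zero trail of length greater than $1$ then $\mathfrak{D}_i = \emptyset$, $\bigtimes D^i = \{\emptyset\}$, $\mathbb{B}_i = \{1\}$, so $\big( \mathbb{B}_i \big) = 1$ still holds.
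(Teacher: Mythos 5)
Your argument is correct and takes essentially the same route as the paper, which states the proposition as a direct consequence of Lemma~\ref{2lem1}(a) --- exactly the tool you apply, once across the indices $i$ and once within each fixed $i$. Your extra care for the fact that $a^{(i)}$ is a set (so $\prod a^{(i)}$ may have fewer than $k_i$ factors), bridged by the divisibility $\prod a^{(i)} \mid {l_{j_1}^{(1)}}' \cdots {l_{j_{k_i}}^{(k_i)}}'$ into the genuine product set $D^i_1 \cdots D^i_{k_i}$, is a refinement the paper leaves implicit.
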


Let $a_1 ^{(i)}, a_2 ^{(i)} \in \bigtimes D^i$. We say that $a_1 ^{(i)} \subset a_2 ^{(i)}$ if $l( a_1 ^{(i)}) \subset l(a_2 ^{(i)})$. If there is no element $a_2 ^{(i)}$ with $a_2 ^{(i)} \subsetneqq a_1 ^{(i)}$, we call $a_1 ^{(i)}$ a minimal element. When $a ^{(i)}$ is minimal, deleting an arbitrary edge from $H_{a ^{(i)}}$ yields a zero trail on $(G,\alpha)$ with no edge it contains lies in $a ^{(i)}$. The minimal elements of $\bigtimes D^i$ in Figure~\ref{fig2} are listed in terms of containing edge labels by \squad $l (a_1 ^{(2)}) = \{ l_2 , l_5 \}$, \squad $l (a_2 ^{(2)}) = \{ l_3 , l_4 \}$, \squad $l (a_3 ^{(2)}) = \{ l_2 , l_4 , l_6 \}$ \squad and \squad $l (a_4 ^{(2)}) = \{ l_3 , l_5 , l_6 \}$.

The following proposition plays a crucial role in the proof of the main results of the paper.

\begin{proposition} Let $a ^{(i)} \in \bigtimes D^i$ and ${l_{t_j} ^{(t)}}' \in a ^{(i)}$ be the component that comes from the zero trail $\ttp_t ^{(v_i , 0)} = < l_{t_1}, l_{t_2}, \ldots, l_{t_s} >$ of $v_i$. Then, the edge label $l_{t_j}$ divides $\prod a ^{(i)} \cdot \LL_i$.
\label{2prop2}
\end{proposition}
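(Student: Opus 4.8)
The plan is to treat this as a bookkeeping identity about divisibility: I would split the edge label $l_{t_j}$ into the two natural divisors that appear in its own definition and then match each of them against one of the two factors on the right-hand side. Precisely, the normalization defining $D^i_t$ gives $l_{t_j} = \big(\ttp_t^{(v_i,0)}\big)\cdot {l_{t_j}^{(t)}}'$, so it suffices to show that ${l_{t_j}^{(t)}}'$ divides $\prod a^{(i)}$ and that $\big(\ttp_t^{(v_i,0)}\big)$ divides $\LL_i$; the claim then follows at once, since in the integral domain $R$ the relations $a\mid b$ and $c\mid d$ imply $ac\mid bd$.

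For the first of these, I would simply observe that ${l_{t_j}^{(t)}}'$ is, by hypothesis, the component of $a^{(i)}$ coming from $\ttp_t^{(v_i,0)}$, hence one of the factors in the product $\prod a^{(i)}$, so ${l_{t_j}^{(t)}}'\mid\prod a^{(i)}$. For the second, I would use that $\ttp_t^{(v_i,0)}$ is a zero trail of $v_i$, so $\big(\ttp_t^{(v_i,0)}\big)$ occurs among the terms of the least common multiple defining $\LL_i$; and even taking into account the reduction in Remark~\ref{2rem1}, if $\ttp_t^{(v_i,0)}$ is not itself retained it contains a retained zero trail $\ttp^{(v_i,0)}$, and because the edge labels of $\ttp^{(v_i,0)}$ form a subset of those of $\ttp_t^{(v_i,0)}$, we have $\big(\ttp_t^{(v_i,0)}\big)\mid\big(\ttp^{(v_i,0)}\big)\mid\LL_i$. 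Combining these two divisibilities with the factorization of $l_{t_j}$ gives $l_{t_j}\mid\prod a^{(i)}\cdot\LL_i$, which is the assertion.

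I expect the only point needing attention to be the second divisibility: one must ensure that the gcd of the \emph{specific} zero trail that supplied the chosen component divides $\LL_i$, and not merely the gcd of an arbitrary zero trail. This is exactly the monotonicity of the gcd under enlarging the set of edge labels, combined with Remark~\ref{2rem1}. Apart from this small bookkeeping check, there is no genuine obstacle — once the definitions are unwound, the argument is short and purely formal.
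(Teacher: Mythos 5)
Your argument is correct and is essentially identical to the paper's proof: factor $l_{t_j} = \big(\ttp_t^{(v_i,0)}\big)\cdot {l_{t_j}^{(t)}}'$, note that ${l_{t_j}^{(t)}}'$ is a factor of $\prod a^{(i)}$ and that $\big(\ttp_t^{(v_i,0)}\big)$ divides $\LL_i$, and multiply the two divisibilities. Your extra check that the gcd of the specific trail still divides $\LL_i$ after the reduction of Remark~\ref{2rem1} is a sound refinement the paper leaves implicit.
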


\begin{proof} Recall that $l_{t_j} = \big( \tp_t ^{(v_i , 0)} \big) \cdot {l_{t_j} ^{(t)}} '$. Here ${l_{t_j} ^{(t)}}'$ divides the product $\prod a ^{(i)}$ and $\big( \tp_t ^{(v_i , 0)} \big)$ divides $\LL_i$. Thus, $\big( \tp_t ^{(v_i , 0)} \big) \cdot {l_{t_j} ^{(t)}} ' = l_{t_j}$ divides $\prod a ^{(i)} \cdot \LL_i$.
\end{proof}

In addition to Proposition~\ref{2prop2}, the label of any zero edge of $v_i$ also divides $\prod a ^{(i)} \cdot \LL_i$.

\begin{proposition} Given an edge labeled $(G , \alpha)$, a zero edge $e_j$ of $v_i$, and $a ^{(i)} \in \bigtimes D^i$, the edge label $l_j$ divides $\prod a ^{(i)} \cdot \LL_i$.
\label{2prop3}
\end{proposition}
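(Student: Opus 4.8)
The plan is to recognize a zero edge as the simplest possible zero trail and to read the conclusion off directly from the definition of $\LL_i$; in fact one proves the formally stronger statement that $l_j \mid \LL_i$. Suppose $e_j$ is a zero edge of $v_i$. By definition $e_j$ joins $v_i$ to some vertex $v_m$ with $m < i$ (which is labeled by zero), so the one-term sequence $\langle l_j \rangle$ is a zero trail of $v_i$, of length $1$. Since the greatest common divisor of a singleton equals that element (up to units), we have $\big( \langle l_j \rangle \big) = l_j$.

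Next, recall that $\LL_i$ is the least common multiple of the quantities $\big( \tp_t^{(v_i,0)} \big)$ taken over all zero trails $\tp_t^{(v_i,0)}$ of $v_i$. The trail $\langle l_j \rangle$ is one of them — being of length $1$ it is minimal under containment, so it survives the reduction of Remark~\ref{2rem1} — hence $l_j$ occurs, up to units, among the elements whose least common multiple is $\LL_i$. Since every element of a set divides the least common multiple of that set, $l_j \mid \LL_i$, and a fortiori $l_j \mid \prod a^{(i)} \cdot \LL_i$.

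I do not expect any real obstacle here: the statement is essentially a bookkeeping complement to Proposition~\ref{2prop2}, which handles the edge labels recorded in $l(a^{(i)})$ (those coming from zero trails of length greater than $1$), whereas Proposition~\ref{2prop3} covers the remaining relevant edge labels, namely those of zero edges. The only points needing care in the write-up are the conventions $\big(\{a\}\big)=a$ and ``each element divides the lcm of its containing set'', together with the remark that $\LL_i$ is built from the full family of zero trails of $v_i$ (equivalently, after Remark~\ref{2rem1}, from the minimal ones, which include every zero edge).
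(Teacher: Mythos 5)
Your proof is correct and follows essentially the same route as the paper's one-line argument: a zero edge is a zero trail of length $1$, so $l_j = \big( \langle l_j \rangle \big)$ divides the least common multiple $\LL_i$, and hence divides $\prod a^{(i)} \cdot \LL_i$. The extra detail you supply (the singleton-gcd convention and the survival of zero edges under Remark~\ref{2rem1}) is harmless elaboration of the same idea.
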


\begin{proof} Since $e_j$ is a zero edge of $v_i$, the edge label $l_j$ divides $\LL_i$, and so $\prod a ^{(i)} \cdot \LL_i$.
\end{proof}

%%%%%%%%%%%%%%%%%%%%%%%%%%%%%%%%%%%%%%%%%%%%%%%%%%%%%%%%%
%%%%%%%%%% Determinantal Methods
%%%%%%%%%%%%%%%%%%%%%%%%%%%%%%%%%%%%%%%%%%%%%%%%%%%%%%%%%

\section{Determinantal Methods}\label{dm}

Let $(G, \alpha)$ be an edge labeled graph with $n$-vertices and $A = \{ F_1 , \ldots , F_n \} \subset R_{(G,\alpha)}$. We can rewrite $A$ in a matrix form, whose columns are the elements of $A$, such as
\begin{displaymath}
A = \begin{pmatrix} F_1 (v_n) & F_2 (v_n) & \ldots & F_n (v_n) \\ \vdots \\ F_1 (v_2) & F_2 (v_2) & \ldots & F_n (v_2) \\ F_1 (v_1) & F_2 (v_1) & \ldots & F_n (v_1) \end{pmatrix}.
\end{displaymath}
$A$ is called a spline matrix and the determinant $\big\vert A \big\vert$ is denoted by $\big\vert F_1 \squad F_2 \squad \ldots \squad F_n \big\vert$. 

Given an edge labeled graph $(G,\alpha)$ with $n$ vertices, we present the element $Q_G$ as
\begin{displaymath}
Q_G = \prod\limits_{i=1} ^n \LL_i.
\end{displaymath}

We give the basis condition for $R_{(G,\alpha)}$ by using $Q_G$ and the determinant $\big\vert A \big\vert$. An explicit formula for $Q_G$ is given in terms of edge labels for cycles, diamond graphs, and trees~\cite{Gjo, Mah, Bla, Alt2}. It is difficult to determine such a formula when the number of edges of $G$ is greater than the number of vertices. We reach our goal without providing a general formula for $Q_G$ for arbitrary graphs. 

The properties of the determinant $\big\vert A \big\vert$ are listed below.

\begin{proposition} Let $(G, \alpha)$ be an edge labeled graph with $n$-vertices. Assume that $\{ F_1 , \ldots , F_n \}$ forms a basis for $R_{(G,\alpha)}$ and let $\{ G_1 , \ldots , G_n \} \subset R_{(G,\alpha)}$. Then $\big\vert F_1 \squad F_2 \squad \ldots \squad F_n \big\vert$ divides $\big\vert G_1 \squad G_2 \squad \ldots \squad G_n \big\vert$.
\label{3prop1}
\end{proposition}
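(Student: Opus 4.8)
The plan is to express each $G_k$ in the basis $\{F_1,\dots,F_n\}$ and then use multilinearity of the determinant. Since $\{F_1,\dots,F_n\}$ is an $R$-module basis for $R_{(G,\alpha)}$ and each $G_k\in R_{(G,\alpha)}$, there exist uniquely determined coefficients $c_{jk}\in R$ with $G_k=\sum_{j=1}^{n} c_{jk}F_j$ for $1\le k\le n$. Assembling these into the coefficient matrix $C=(c_{jk})$, the spline matrix whose columns are the $G_k$ is exactly the product of the spline matrix whose columns are the $F_j$ with $C$. Concretely, if $M_F$ denotes the $n\times n$ matrix with columns $F_1,\dots,F_n$ (written in the row order used in the definition of the spline matrix, $v_n$ at the top down to $v_1$ at the bottom) and $M_G$ the analogous matrix for the $G_k$, then $M_G=M_F\,C$, because the $(v_i,k)$ entry of $M_G$ is $G_k(v_i)=\sum_j c_{jk}F_j(v_i)$.

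Taking determinants and using the multiplicativity of the determinant over the commutative ring $R$ gives
\begin{displaymath}
\big\vert G_1 \squad G_2 \squad \ldots \squad G_n \big\vert
= \det(M_G) = \det(M_F)\cdot\det(C)
= \big\vert F_1 \squad F_2 \squad \ldots \squad F_n \big\vert \cdot \det(C).
\end{displaymath}
Since $\det(C)\in R$, this exhibits $\big\vert F_1 \squad F_2 \squad \ldots \squad F_n \big\vert$ as a divisor of $\big\vert G_1 \squad G_2 \squad \ldots \squad G_n \big\vert$, which is the claim. No GCD-domain hypothesis is needed here; commutativity of $R$ suffices.

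There is essentially no serious obstacle: the only point requiring a word of care is the existence of the coefficients $c_{jk}\in R$, which is precisely what it means for $\{F_1,\dots,F_n\}$ to be a basis (in particular a generating set) of the module $R_{(G,\alpha)}$, together with the elementary fact that the matrix identity $M_G=M_F C$ is just the coordinate form of the change-of-basis expressions. I would also remark, for later use, that the same computation shows $\big\vert F_1 \squad \ldots \squad F_n \big\vert$ and $\big\vert G_1 \squad \ldots \squad G_n \big\vert$ are associates exactly when $\{G_1,\dots,G_n\}$ is itself a basis, i.e. when $\det(C)$ is a unit; this is the direction that will matter when we turn the divisibility into a basis criterion in the later sections.
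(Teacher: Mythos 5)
Your proof is correct: expressing each $G_k$ in the basis, forming the coefficient matrix $C$, and using $M_G = M_F C$ together with multiplicativity of the determinant is exactly the standard argument. The paper itself gives no details here, deferring to Lemma 5.1.4 of \cite{Gjo}, which proves the statement by the same change-of-basis computation, so your write-up simply supplies the argument the paper cites.
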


\begin{proof} See Lemma 5.1.4. in~\cite{Gjo}.
\end{proof}

\begin{proposition} Let $(G, \alpha)$ be an edge labeled graph with $n$-vertices. Let\linebreak $\{ F_1 , F_2, \ldots , F_n \} \subset R_{(G , \alpha)}$. If $\big\vert F_1 \squad F_2 \squad \ldots \squad F_n \big\vert = r \cdot Q_G$ where $r \in R$ is a unit, then $\{ F_1 , F_2 , \ldots , F_n \}$ forms a basis for $R_{(G , \alpha)}$.
\label{3prop2}
\end{proposition}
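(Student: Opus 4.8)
The plan is to deduce this from Proposition~\ref{3prop1} together with a suitable ``reference'' basis whose spline matrix has determinant exactly a unit times $Q_G$. First I would recall that over a GCD domain $R$ the module $R_{(G,\alpha)}$ is known to be free of rank $n$ (this is standard; one may obtain a basis by clearing denominators in a PID localization, or cite the structure results referenced in the introduction). Fix such a basis $\{B_1,\dots,B_n\}$ and write $B$ for its spline matrix. By Proposition~\ref{3prop1}, $|B|$ divides $|F_1\ \cdots\ F_n| = r\cdot Q_G$, so it suffices to know that $|B|$ is, up to a unit, equal to $Q_G$ — because then $|F_1\ \cdots\ F_n|$ and $|B|$ are associates, and a standard change-of-basis argument (the transition matrix between $\{F_i\}$ and $\{B_i\}$ has determinant a unit, hence is invertible over $R$) forces $\{F_1,\dots,F_n\}$ to be a basis as well.

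So the crux is the equality $|B| \doteq Q_G$ for \emph{some} (equivalently every) basis $B$. For the upper bound $Q_G \mid |B|$: order the basis so that $B$ is (upper- or lower-) triangularizable compatibly with the flow-up filtration, i.e.\ replace $\{B_i\}$ by a basis in flow-up form $\{F^{(1)},\dots,F^{(n)}\}$ with $F^{(i)}(v_j)=0$ for $j<i$; such a flow-up basis always exists over a GCD domain by the usual Gaussian-elimination-type argument on the spline matrix, and it does not change $|B|$ up to a unit. Then $|B| = \prod_i F^{(i)}(v_i)$. Each diagonal entry $F^{(i)}(v_i)$ is a spline value at $v_i$ of an $i$-th flow-up class, and Theorem~\ref{2thm1} applied along every zero trail of $v_i$ shows $(\ttp^{(v_i,0)})$ divides $F^{(i)}(v_i)$ for each zero trail; since $\LL_i$ is the lcm of these gcd's, $\LL_i \mid F^{(i)}(v_i)$, and multiplying over $i$ gives $Q_G \mid |B|$. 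For the reverse divisibility $|B| \mid Q_G$: it is enough to exhibit one element set $\{H_1,\dots,H_n\}\subset R_{(G,\alpha)}$ with $|H_1\ \cdots\ H_n| = Q_G$ up to a unit, since then by Proposition~\ref{3prop1} $|B|$ divides $Q_G$. Such an $\{H_i\}$ should be produced in flow-up form with $H_i(v_i)=\LL_i$; constructing genuine splines realizing the value $\LL_i$ at $v_i$ (which, as the paper notes, need not exist individually over a non-PID) is exactly where the machinery of Section~\ref{bcocg} — the algorithm producing special splines, and Propositions~\ref{2prop2} and~\ref{2prop3} guaranteeing the relevant divisibilities — will be invoked.

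I expect the main obstacle to be precisely this last construction: showing that there is a family of splines whose spline-matrix determinant hits $Q_G$ on the nose rather than a proper multiple of it. Over a PID the flow-up classes with $F^{(i)}(v_i)=\LL_i$ exist outright and the argument is immediate; over a general GCD domain one must instead assemble, for each $i$, several splines whose combined contribution at $v_i$ has gcd $\LL_i$ while controlling the off-diagonal entries so the determinant does not pick up extra factors. The divisibility facts of Proposition~\ref{2prop2} (each edge label $l_{t_j}$ divides $\prod a^{(i)}\cdot\LL_i$) and Proposition~\ref{2prop3} are the tools that let one verify the candidate vertex-labelings are actually splines, and Proposition~\ref{2prop1} — that the gcd over all the products $\prod a^{(2)}\cdots\prod a^{(n-1)}$ is $1$ — is what ultimately cancels the spurious factors so that the determinant collapses to $Q_G$ up to a unit. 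Once that determinant computation is in hand, the present proposition follows formally from Proposition~\ref{3prop1} as described above. For the purposes of this proposition, though, one may simply cite the forthcoming Theorem~\ref{5thm1} (or its complete-graph precursor) for the existence of a basis with determinant $Q_G$, and then the two-line divisibility-and-change-of-basis argument finishes.
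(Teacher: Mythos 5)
Your strategy does not match the paper's, and it rests on inputs that are not available and cannot be obtained without circularity. The argument hinges on (i) $R_{(G,\alpha)}$ being free of rank $n$ over an arbitrary GCD domain, and (ii) every basis being convertible to flow-up (triangular) form. Neither is ``standard'': freeness of generalized spline modules over GCD domains is exactly the kind of statement this line of papers avoids assuming (all the main results are conditional, of the form ``a \emph{given} set is a basis if and only if\ldots''), and the Gaussian-elimination argument for producing a flow-up basis requires expressing a gcd of row entries as an $R$-linear combination of them, which is a B\'ezout/PID phenomenon that fails in a general GCD domain. Your fallback---citing Theorem~\ref{5thm1} or its complete-graph precursor for the existence of a basis with determinant $Q_G$---is circular: Theorem~\ref{5thm1} is deduced from Theorem~\ref{4thm4}, whose ``if'' direction \emph{is} the present proposition. (Moreover, no single spline set with determinant exactly $Q_G$ need exist; the machinery of Section~\ref{bcocg} only produces a family of sets whose determinants have greatest common divisor $Q_{K_n}$.)

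The intended proof (Lemma 3.19 of \cite{Alt2}, which the paper observes goes through verbatim for arbitrary graphs) is direct and needs no pre-existing basis. Linear independence is immediate: $\big\vert F_1 \squad \ldots \squad F_n \big\vert = r \cdot Q_G \neq 0$ in an integral domain. For spanning, given $G \in R_{(G,\alpha)}$ one solves $\sum_i c_i F_i = G$ by Cramer's rule, so that $c_i$ is the quotient of $\big\vert F_1 \squad \ldots \squad G \squad \ldots \squad F_n \big\vert$ by $r \cdot Q_G$; the numerator is again the determinant of a spline matrix, hence divisible by $Q_G$ by Theorem~\ref{3thm1}, and since $r$ is a unit each $c_i$ lies in $R$. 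You correctly identified Proposition~\ref{3prop1} and the divisibility $Q_G \mid \big\vert A \big\vert$ as the relevant tools, but the latter must be applied to the \emph{modified} matrices arising in Cramer's rule, not to a hypothetical reference basis.
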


\begin{proof} The proof of Lemma 3.19 in~\cite{Alt2} holds for an arbitrary edge labeled graph $(G,\alpha)$.
\end{proof}

\begin{theorem} Given an edge labeled graph $(G, \alpha)$ with $n$-vertices and a set of splines $\{ F_1 , F_2, \ldots , F_n \} \subset R_{(G , \alpha)}$, the element $Q_G$ divides $\big\vert F_1 \squad F_2 \squad \ldots \squad F_n \big\vert$. 
\label{3thm1}
\end{theorem}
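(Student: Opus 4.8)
The plan is to prove that $Q_G = \prod_{i=1}^n \LL_i$ divides $\big\vert F_1 \squad F_2 \squad \ldots \squad F_n \big\vert$ by exhibiting, for each $i$, a divisor of the determinant that is a multiple of $\LL_i$, and then combining these divisibility facts carefully so as not to lose the full product. The natural way to produce such divisors is through elementary column operations that do not change the determinant: starting from the spline matrix $A$ (written with the $v_1$-row at the bottom and the $v_n$-row at the top), I would perform column reductions to bring it into a form where the diagonal-type entries are visibly divisible by the $\LL_i$'s. The key algebraic inputs are Theorem~\ref{2thm1} (the gcd of a $v_j$-trail of $v_i$ divides $F(v_i) - F(v_j)$), which controls how spline values differ along the graph, and Propositions~\ref{2prop2} and~\ref{2prop3}, which say that every edge label appearing in a zero trail of $v_i$ divides $\prod a^{(i)} \cdot \LL_i$.

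First I would set up, for each $i$ from $n$ down to $2$, the picture in which $v_1,\dots,v_{i-1}$ are treated as ``zero'' vertices — this is exactly the setting in which $\LL_i$ and the zero trails of $v_i$ are defined. Concretely, after suitable column operations (subtracting multiples of the columns that handle lower-indexed vertices), the $i$-th ``pivot column'' should have a value at $v_i$ that, up to the choices $a^{(i)} \in \bigtimes D^i$, is forced to be divisible by $\LL_i$: the difference of the spline's value at $v_i$ from its value at each lower vertex is divisible by the gcd of every zero trail connecting them (Theorem~\ref{2thm1}), and taking the lcm over all zero trails is precisely $\LL_i$. So the entry the column operations isolate at position $(v_i, i)$ is a multiple of $\LL_i$. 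Doing this for all $i$ simultaneously (processing vertices in decreasing index order so that earlier-processed columns are not disturbed) makes the reduced matrix lower-triangular-like with the $i$-th diagonal entry divisible by $\LL_i$, hence the determinant is divisible by $\prod_i \LL_i = Q_G$.

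The subtlety — and the step I expect to be the main obstacle — is that the column operations available over a GCD domain are more restrictive than over a PID: I cannot freely perform the ``Bézout'' column combination that would let me replace two entries $a,b$ by $(a,b)$ and $0$. This is exactly why the machinery of $\bigtimes D^i$, the sets $D^i_t$, and Proposition~\ref{2prop1} was built up in Section~\ref{pre}: rather than reducing a single column to have entry $\LL_i$, I will likely have to argue that the determinant is divisible by $\prod a^{(i)} \cdot \LL_i$ for \emph{every} admissible choice of the $a^{(i)}$, and then invoke Proposition~\ref{2prop1} (the gcd over all such products is $1$) to conclude divisibility by $\LL_i$ alone. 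Making this simultaneous over all $i$ requires that the ``bad factors'' $\prod a^{(i)}$ introduced at stage $i$ be coprime across the different stages, which is again guaranteed by Proposition~\ref{2prop1} / Lemma~\ref{2lem1}(a). So the heart of the argument is a bookkeeping lemma: for each tuple $(a^{(2)},\dots,a^{(n-1)})$ of admissible choices, the product $Q_G \cdot \prod_{i} \prod a^{(i)}$ divides $\big\vert F_1 \ \ldots \ F_n \big\vert$, proved by the column-reduction sketched above with the zero-trail edge labels $l_{t_j}$ playing the role of the entries cleared out (using Propositions~\ref{2prop2} and~\ref{2prop3} to see that $\LL_i \prod a^{(i)}$ absorbs them); then $\gcd$-ing over all tuples and using $\big(\mathbb{A}\big) = 1$ from Proposition~\ref{2prop1} finishes the proof.
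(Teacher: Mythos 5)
Your proposal diverges from the paper's argument and contains two genuine errors; the actual proof is considerably more elementary than what you sketch. The local claim your reduction rests on does not hold: no single pivot entry can be made divisible by $\LL_i$ by one subtraction. The element $\LL_i$ is the \emph{least common multiple} of the gcds $\big( \tp ^{(v_i,0)} \big)$ over all zero trails of $v_i$, and these trails terminate at \emph{different} vertices $v_j$ with $j<i$; Theorem~\ref{2thm1} only gives that $F(v_i)-F(v_j)$ is divisible by the gcd of a trail ending at that particular $v_j$, so a single difference is in general not divisible by $\LL_i$. (Also, the operations that exploit Theorem~\ref{2thm1} are \emph{row} operations --- subtracting the $v_j$-row from the $v_i$-row so that every entry of the new row becomes $F_k(v_i)-F_k(v_j)$ --- not column operations, which combine the splines $F_k$ rather than compare values at two vertices.) The paper instead chooses \emph{one} zero trail $\tp ^{(v_i,0)}$ per vertex, performs the row subtractions, concludes that the product $\big( \tp ^{(v_2,0)} \big)\cdots\big( \tp ^{(v_n,0)} \big)$ divides the determinant for \emph{every} such choice, and only then takes the least common multiple over all choices; Lemma~\ref{2lem1}(b) is what identifies this lcm of products with $\LL_2\cdots\LL_n = Q_G$. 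The lcm is taken of the products, not entrywise, and that interchange is the one nontrivial step.

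Second, your proposed ``bookkeeping lemma'' --- that $Q_G\cdot\prod a^{(2)}\cdots\prod a^{(n-1)}$ divides $\big\vert F_1 \squad \ldots \squad F_n \big\vert$ for every admissible tuple --- is false: for a basis the determinant equals $r\cdot Q_G$ with $r$ a unit (Theorem~\ref{4thm3}), so your lemma would force each $\prod a^{(i)}$ to be a unit. The logic is also reversed: from ``every element of a set divides $D$'' one concludes that the \emph{lcm} divides $D$, whereas the fact $\big(\mathbb{A}\big)=1$ of Proposition~\ref{2prop1} is useful only in the opposite direction (``$r$ divides every element, hence $r$ divides the gcd''), which is exactly how the paper deploys it later in the proof of Theorem~\ref{4thm3}. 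None of the $\bigtimes D^i$ machinery, nor Propositions~\ref{2prop1}, \ref{2prop2}, \ref{2prop3}, is needed for Theorem~\ref{3thm1}: the only inputs are Theorem~\ref{2thm1} and Lemma~\ref{2lem1}.
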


\begin{proof} Let $F_i = \big( F_i (v_1) , \ldots , F_i (v_n) \big)$ and $\tp ^{(v_2 , 0)} , \tp ^{(v_3 , 0)} , \ldots , \tp ^{(v_n , 0)}$ be arbitrary zero trails of $v_i$ for $2 \leq i \leq n$. Then, $\tp ^{(v_i , 0)}$ corresponds to a $v_{i_j}$-trail of $v_i$ with $i_j < i$. By using elementray row operations on the spline matrix $A$, we replace the $(n-i+1)$-th row of $A$ as
\begin{displaymath}
\big( F_1 (v_i) \squad , \ldots , \squad F_n (v_i) \big) \to \big( F_1 (v_i) - F_1 (v_{i_j}) \squad, \ldots , \squad F_n (v_i) - F_n (v_{i_j}) \big).
\end{displaymath}
Here $\big( \tp ^{(v_i , 0)} \big)$ divides this new row by Theorem~\ref{2thm1}. Note that such row operations fix the last row of $A$. After these proper row operations, one concludes that the product $\big( \tp ^{(v_2 , 0)} \big) \cdot \big( \tp ^{(v_3 , 0)} \big) \cdots \big( \tp ^{(v_n , 0)} \big)$ divides the determinant $\big\vert A \big\vert$. Since the zero trails are chosen arbitrarily, we conclude that each element of the set
\begin{displaymath}
\mathbb{P} = \Big\{ \big( \tp ^{(v_2 , 0)} \big) \cdot \big( \tp ^{(v_3 , 0)} \big) \cdots \big( \tp ^{(v_n , 0)} \big) \text{ $\vert$ } \tp ^{(v_i , 0)} \text{ is a zero trail of $v_i$ for $2 \leq i \leq n$} \Big\}
\end{displaymath}
divides $\big\vert A \big\vert$. Therefore, the least common multiple
\begin{displaymath}
\big[ \mathbb{P} \big] = \LL_1 \cdot \LL_2 \cdot \LL_3 \cdots \LL_n = Q_G
\end{displaymath}
divides $\big\vert A \big\vert = \big\vert F_1 \squad F_2 \squad \ldots \squad F_n \big\vert$ by Lemma~\ref{2lem1}.
\end{proof}

%%%%%%%%%%%%%%%%%%%%%%%%%%%%%%%%%%%%%%%%%%%%%%%%%%%%%%%%%
%%%%%%%%%% Basis Condition on Complete Graphs Section
%%%%%%%%%%%%%%%%%%%%%%%%%%%%%%%%%%%%%%%%%%%%%%%%%%%%%%%%%

\section{Basis Condition on Complete Graphs}\label{bcocg}

A complete graph with $n$ vertices, denoted by $K_n$, is a graph such that $uv \in E(K_n)$ for each distinct pair of vertices $u,v \in V(K_n)$. The most challenging graph type of studying splines is complete graphs because any pair of vertices is connected, and there are many spline conditions to check. Therefore, previously, determinantal techniques have never been used efficiently for module bases of $R_{(K_n , \alpha)}$. In this section, we focus on splines on complete graphs over GCD domains and give a determinantal basis criterion for $R_{(K_n , \alpha)}$.

The first aim of this section is to answer the following problem.

\begin{problem} Let $(K_n , \alpha)$ be an edge labeled complete graph and $R$ be a GCD domain. Fix an integer $i$ with $2 \leq i \leq n-1$. For each element of $a ^{(i)} \in \bigtimes D^i$, can we find a spline $F \in R_{(K_n , \alpha)}$ such that $F (v) \in \Big\{ 0, \squad \prod a ^{(i)} \cdot \LL_i \Big\}$ for all $v \in V(K_n)$? We symbolize such a spline by $F_{a ^{(i)}}$.
\label{4prob1}
\end{problem}

Problem~\ref{4prob1} is easy to answer for any type of graph in the following case.

\begin{proposition} Let $(G,\alpha)$ be an arbitrary edge labeled graph and assume that $l_{u v_i}$ lie in $a ^{(i)} \in \bigtimes D^i$ for all $u v_i \in E(G)$. A vertex labeling $g_{a ^{(i)}}$ such that $g_{a ^{(i)}} (v_i) = \prod a ^{(i)} \cdot \LL_i$ and $g_{a ^{(i)}} (v) = 0$ for $v \neq v_i$ is a spline on $R_{(G , \alpha)}$.
\label{4prop2}
\end{proposition}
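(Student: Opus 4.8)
The plan is to verify directly that the vertex labeling $g_{a^{(i)}}$ satisfies the spline condition on every edge of $G$, splitting the edges into three groups according to whether they are incident to $v_i$. Set $N = \prod a^{(i)} \cdot \LL_i$, so that $g_{a^{(i)}}(v_i) = N$ and $g_{a^{(i)}}(v) = 0$ for every other vertex $v$. On any edge $uv$ with $u,v \neq v_i$, the difference of labels is $0 - 0 = 0 \in \alpha(uv)$, so there is nothing to check. The remaining edges are exactly the edges $uv_i$ incident to $v_i$, and on such an edge the difference of labels is $g_{a^{(i)}}(v_i) - g_{a^{(i)}}(u) = N$; we must show $l_{uv_i}$ divides $N$.

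For an edge $uv_i$ with $u = v_j$ and $j < i$, the edge $\langle l_{v_iv_j}\rangle$ is a zero edge of $v_i$, hence $l_{uv_i}$ divides $\LL_i$ and therefore divides $N = \prod a^{(i)} \cdot \LL_i$; alternatively this is precisely Proposition~\ref{2prop3}. For an edge $uv_i$ with $u = v_j$ and $j > i$, the hypothesis of the proposition says exactly that $l_{uv_i}$ lies in $a^{(i)}$, i.e. $l_{uv_i} = l_{t_j}$ is one of the edge labels coming from some zero trail $\tp_t^{(v_i,0)}$ with the chosen component ${l_{t_j}^{(t)}}' \in a^{(i)}$. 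By Proposition~\ref{2prop2}, $l_{t_j}$ divides $\prod a^{(i)} \cdot \LL_i = N$. This covers all edges incident to $v_i$, since every such edge is either a zero edge (when the other endpoint has smaller index) or, by assumption, has its label lying in $a^{(i)}$ (when the other endpoint has larger index).

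Combining the three cases, every edge of $G$ satisfies the spline condition, so $g_{a^{(i)}} \in R_{(G,\alpha)}$, as claimed. I do not expect any genuine obstacle here: once the edges are partitioned correctly, each case is an immediate citation of Proposition~\ref{2prop2} or Proposition~\ref{2prop3}, and the point of the statement is really just to record that the hypothesis "$l_{uv_i}$ lies in $a^{(i)}$ for all $uv_i \in E(G)$" is precisely what is needed to make the naive single-nonzero-vertex labeling a spline. The only mild care needed is to make sure that the edges incident to $v_i$ are exhausted by the two subcases (other endpoint of smaller index versus larger index), which is automatic since $v_i$ has a well-defined position in the vertex ordering.
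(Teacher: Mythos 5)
Your proof is correct and takes essentially the same route as the paper: a direct edge-by-edge verification, with edges not incident to $v_i$ trivially satisfying the condition and edges incident to $v_i$ handled by divisibility of $\prod a^{(i)} \cdot \LL_i$. The only difference is that you separate out the zero edges $v_jv_i$ with $j<i$ and cite Proposition~\ref{2prop3} for them, whereas the paper invokes only Proposition~\ref{2prop2} for every edge at $v_i$; your case split is, if anything, the more precise reading.
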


\begin{proof} Consider the edge $uv \in E(G)$. Assume that $u$ or $v$ is equal to $v_i$, then $F(u) - F(v) = \pm \prod a ^{(i)} \cdot \LL_i$ and $l_{uv}$ divides the difference by Proposition~\ref{2prop2}. Hence, the spline condition holds on $uv$. If none of $u$ or $v$ is equal to $v_i$, then $g_{a ^{(i)}} (u) = g_{a ^{(i)}} (v) = 0$ and the spline condition holds on $uv$. Thus, $g_{a ^{(i)}} \in R_{(G, \alpha)}$.
\end{proof}

We consider the case of $l_{u v_i}$ do not lie in $a ^{(i)}$ for some $u v_i \in E(K_n)$ in Algorithm~\ref{4alg1}. Nevertheless, we first reduce Problem~\ref{4prob1} to minimal elements of $\bigtimes D^i$.

\begin{theorem} Let $a ^{(i)} \in \bigtimes D^i$ be minimal. For any element ${a^*} ^{(i)} \in \bigtimes D^i$ with $a ^{(i)} \subset {a^*} ^{(i)}$, a spline $F_{a ^{(i)}} \in R_{(K_n , \alpha)}$ induces a spline $F_{{a^*} ^{(i)}} \in R_{(K_n , \alpha)}$.
\label{4thm1}
\end{theorem}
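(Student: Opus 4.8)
The plan is to exploit the minimality of $a^{(i)}$ together with the observation that the nonzero value $\prod {a^*}^{(i)} \cdot \LL_i$ differs from $\prod a^{(i)} \cdot \LL_i$ only by the unit-free factor $\prod {a^*}^{(i)} / \prod a^{(i)}$, which lies in $R$ because $a^{(i)} \subset {a^*}^{(i)}$ forces $l(a^{(i)}) \subset l({a^*}^{(i)})$ and hence each component of $a^{(i)}$ divides the corresponding component of ${a^*}^{(i)}$ (coming from the same zero trail). Write $c = \prod {a^*}^{(i)} / \prod a^{(i)} \in R$. Given a spline $F_{a^{(i)}} \in R_{(K_n,\alpha)}$ with values in $\{0, \prod a^{(i)} \cdot \LL_i\}$, I would first try the naive candidate $F_{{a^*}^{(i)}} := c \cdot F_{a^{(i)}}$: multiplying a spline by a ring element again gives a spline, and its values lie in $\{0, c \cdot \prod a^{(i)} \cdot \LL_i\} = \{0, \prod {a^*}^{(i)} \cdot \LL_i\}$, exactly as required.

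The subtlety — and the reason the statement is phrased as "induces" rather than "equals up to scaling" — is that one must check this $F_{{a^*}^{(i)}}$ is genuinely a spline \emph{witnessing Problem~\ref{4prob1} for ${a^*}^{(i)}$}, i.e. that it is not the zero spline and that the value $\prod {a^*}^{(i)} \cdot \LL_i$ is actually attained. Nonvanishing of $F_{a^{(i)}}$ at some vertex, hence of $F_{{a^*}^{(i)}}$ there, follows since $R$ is a domain and $c \neq 0$. So the content is really the verification that the spline condition on $(K_n,\alpha)$ is preserved and that the construction is consistent with how $F_{{a^*}^{(i)}}$ will later be produced by Algorithm~\ref{4alg1}; I would phrase the argument so that whatever $0/1$-pattern $F_{a^{(i)}}$ realizes on the vertices of $K_n$, scaling by $c$ keeps every edge difference divisible by the appropriate $l_{uv}$ — for edges with both endpoints in the support this is automatic, and for the others it follows from $F_{a^{(i)}}$ already being a spline.

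The main obstacle I anticipate is purely bookkeeping: making precise that $\prod {a^*}^{(i)}$ is divisible by $\prod a^{(i)}$ requires pairing up components of the two elements zero-trail-by-zero-trail, using that both $a^{(i)}$ and ${a^*}^{(i)}$ select one representative $l_{t_j}^{(t)}{}'$ from each $D^i_t$, and that $l(a^{(i)}) \subset l({a^*}^{(i)})$ together with the relation $l_{t_j} = \big(\tp_t^{(v_i,0)}\big)\cdot l_{t_j}^{(t)}{}'$ pins down the divisibility within each $D^i_t$. One should also note that the containment $a^{(i)} \subset {a^*}^{(i)}$ is about the sets $l(\cdot)$ of edge labels, not about the formal tuples, so a short argument is needed that $c$ is well-defined as a product over $l({a^*}^{(i)}) \setminus l(a^{(i)})$ of the relevant quotients. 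Once this divisibility is set up, the rest is immediate from the $R$-module structure of $R_{(K_n,\alpha)}$ and Proposition~\ref{2prop2}.
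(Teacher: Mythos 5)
Your construction of the induced spline coincides with the paper's: replace the nonzero value $\prod a^{(i)}\cdot\LL_i$ by $\prod {a^*}^{(i)}\cdot\LL_i$ wherever it occurs and keep the zeros. The divergence is in how the spline condition is then verified, and your route has a genuine gap. You funnel everything through the scalar $c=\prod {a^*}^{(i)}/\prod a^{(i)}$ and the $R$-module structure, so your proof stands or falls with the claim $\prod a^{(i)}\mid\prod {a^*}^{(i)}$. The justification you sketch --- that each component of $a^{(i)}$ divides ``the corresponding component'' of ${a^*}^{(i)}$ from the same zero trail --- does not go through: the relation $a^{(i)}\subset{a^*}^{(i)}$ is defined as $l(a^{(i)})\subset l({a^*}^{(i)})$, a containment of \emph{sets of edge labels}, so the two elements need not select the same edge from a given trail $\tp_t^{(v_i,0)}$, and distinct members of a single $D^i_t$ satisfy no divisibility relation in general. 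Schematically, if one trail carries labels $\{a,b\}$ and another carries $\{b,c\}$, the minimal element choosing $b$ from both trails is contained (in the $l(\cdot)$ sense) in the element choosing $b$ from the first and $c$ from the second, yet the product divisibility would force $b/(b,c)$ to divide $c/(b,c)$, which already fails over $\mathbb{Z}$ (take $b=4$, $c=6$). So the ``bookkeeping'' you defer is precisely the step that does not close.

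The paper asserts the same divisibility in one sentence, but its verification never actually uses it: it checks each edge $uv\in E(K_n)$ directly, invoking Proposition~\ref{2prop2} for ${a^*}^{(i)}$ when $uv\in E(H_{{a^*}^{(i)}})$ (so $\alpha(uv)$ divides $\prod{a^*}^{(i)}\cdot\LL_i$, which is the only possible nonzero difference of the new labels), and arguing that the labels of $F_{a^{(i)}}$, hence of $F_{{a^*}^{(i)}}$, agree across edges outside $H_{{a^*}^{(i)}}\subset$ complement via $H_{a^{(i)}}\subset H_{{a^*}^{(i)}}$. To repair your argument, abandon the scalar $c$ entirely: define $F_{{a^*}^{(i)}}$ by the case-wise substitution (which needs no divisibility between the two products) and verify the edge conditions as the paper does. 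The rest of your discussion (nonvanishing, consistency with Algorithm~\ref{4alg1}) is not where the difficulty lies.
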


\begin{proof} Since $a ^{(i)} \subset {a^*} ^{(i)}$, we have $H_{a ^{(i)}} \subset H_{{a^*} ^{(i)}}$ and $\prod a ^{(i)}$ divides $\prod {a^*} ^{(i)}$. We define the induced spline $F_{{a^*} ^{(i)}}$ as follows.
\begin{displaymath}
F_{{a^*} ^{(i)}} (v) = \begin{cases}
\prod {a^*} ^{(i)} \cdot \LL_i & \text{ if } F_{a ^{(i)}} (v) = \prod a ^{(i)} \cdot \LL_i, \\
0 & \text{ if } F_{a ^{(i)}} (v) = 0.
\end{cases}
\end{displaymath}
To conclude $F_{{a^*} ^{(i)}} \in R_{(K_n , \alpha)}$, consider an arbitrary edge $uv \in E(K_n)$. If $uv \not\in E(H_{{a^*} ^{(i)}})$, then $uv \not\in E(H_{a ^{(i)}})$ and $F_{{a^*} ^{(i)}} (u) = F_{{a^*} ^{(i)}} (v)$ since $F_{a ^{(i)}} \in R_{(K_n , \alpha)}$. So the spline condition holds. If $uv \in E(H_{{a^*} ^{(i)}})$, we have $\alpha(uv)$ divides $\prod {a^*} ^{(i)} \cdot \LL_i$ by Proposition~\ref{2prop2} and the spline condition holds again. Therefore, $F_{{a^*} ^{(i)}} \in R_{(K_n , \alpha)}$.
\end{proof}

We present an application of Theorem~\ref{4thm1} at the end of Example~\ref{4ex1}. We introduce the following algorithm to construct a spline $F_{a ^{(i)}} \in R_{(K_n , \alpha)}$ when $R$ is a GCD domain.

\begin{algorithm} Let $(K_n , \alpha)$ be an edge labeled complete graph and $a ^{(i)} \in \bigtimes D^i$ be a minimal element. Assume that  $v_i v_{k_1} , \ldots , v_i v_{k_t} \not\in E(H_{a ^{(i)}})$ with $k_j > i$ for $1 \leq j \leq t$. Define a vertex labeling $g_{a ^{(i)}}$ on $(K_n , \alpha)$ as follows.
\begin{enumerate}
	\item Set $g_{a ^{(i)}}(v_i) = \prod a ^{(i)} \cdot \LL_i$.
	\item Set $g_{a ^{(i)}}(v_{k_j}) = \prod a ^{(i)} \cdot \LL_i$ for $1 \leq j \leq t$.
	\item For a vertex $v_s$ with $s > i$ and $v_i v_s \in E(H_{a ^{(i)}})$, set $g_{a ^{(i)}}(v_s) = 0$ if $v_s v_{k_j} \in E(H_{a ^{(i)}})$ for $1 \leq j \leq t$. Otherwise, set $g_{a ^{(i)}}(v_s) = \prod a ^{(i)} \cdot \LL_i$.
\end{enumerate}
\label{4alg1}
\end{algorithm}

Recall that any vertex $v_s$ with $s < i$ is pre-labeled by zero by the setting of $a ^{(i)} \in \bigtimes D^i$. Algorithm~\ref{4alg1} agrees these pre-labeled vertices: Assume that $v_s \in V(K_n)$ with $s < i$, then $< l_{v_s v_{k_j}} , l_{v_{k_j} v_i} >$ is a zero trail of $v_i$ and $v_{k_j} v_i \not\in E(H_{a ^{(i)}})$ for $1 \leq j \leq t$. Thus, $v_s v_{k_j} \in E(H_{a ^{(i)}})$ for all $j$ by Remark~\ref{2rem3} and $g_{a ^{(i)}}(v_s) = 0$. 

The results of Algorithm~\ref{4alg1} are presented below.

\begin{proposition} The vertex labeling $g_{a ^{(i)}}$ is a spline in $R_{(H_{a ^{(i)}} , \alpha)}$.
\label{4prop3}
\end{proposition}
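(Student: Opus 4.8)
The plan is to verify the spline condition directly on every edge of $H_{a^{(i)}}$, organized by which of the three cases in Algorithm~\ref{4alg1} the endpoints fall into. First I would fix an arbitrary edge $v_pv_q \in E(H_{a^{(i)}})$. By Proposition~\ref{2prop2}, if an edge label $l_{pq}$ lies in $a^{(i)}$ then $l_{pq}$ divides $\prod a^{(i)} \cdot \LL_i$; since every edge of $H_{a^{(i)}}$ carries a label that lies in $a^{(i)}$ by construction, the spline condition $l_{pq} \mid g_{a^{(i)}}(v_p) - g_{a^{(i)}}(v_q)$ holds automatically whenever $g_{a^{(i)}}(v_p) - g_{a^{(i)}}(v_q) \in \{0, \pm \prod a^{(i)} \cdot \LL_i\}$. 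So the whole proof reduces to checking that the difference of labels on the endpoints of every edge of $H_{a^{(i)}}$ never takes any other value — equivalently, that the labeling $g_{a^{(i)}}$ is constant (equal to $0$ or to $\prod a^{(i)} \cdot \LL_i$) on each edge only when forced, and otherwise the two allowed nonzero/zero values differ by exactly $\pm \prod a^{(i)} \cdot \LL_i$, which is divisible by $l_{pq}$.

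Since $g_{a^{(i)}}$ only ever takes the two values $0$ and $\prod a^{(i)} \cdot \LL_i$, the difference on any edge is one of $0, \prod a^{(i)} \cdot \LL_i, -\prod a^{(i)}\cdot\LL_i$, so in fact Proposition~\ref{2prop2} handles \emph{every} edge of $H_{a^{(i)}}$ at a stroke: every such edge carries a label lying in $a^{(i)}$, hence dividing $\prod a^{(i)}\cdot \LL_i$, hence dividing the difference. The only subtlety is that Algorithm~\ref{4alg1} assigns labels to all vertices of $K_n$, but Proposition~\ref{4prop3} only claims $g_{a^{(i)}} \in R_{(H_{a^{(i)}},\alpha)}$, so we need only look at edges of the subgraph $H_{a^{(i)}}$, not all of $K_n$. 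Thus the argument is: restrict attention to $v_pv_q \in E(H_{a^{(i)}})$; the label $l_{pq}$ lies in $a^{(i)}$ by the definition of $H_{a^{(i)}}$ via $l(a^{(i)})$; by Proposition~\ref{2prop2} (or Proposition~\ref{2prop3} in the degenerate case of a zero edge, though zero edges of $v_i$ do not lie in $H_{a^{(i)}}$) the element $l_{pq}$ divides $\prod a^{(i)}\cdot \LL_i$; and since $g_{a^{(i)}}(v_p)-g_{a^{(i)}}(v_q)\in\{0,\pm\prod a^{(i)}\cdot\LL_i\}$, the difference is divisible by $l_{pq}$. Hence $g_{a^{(i)}}\in R_{(H_{a^{(i)}},\alpha)}$.

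The one place where care is genuinely needed — and what I expect to be the main obstacle — is confirming that steps (1)--(3) are mutually consistent with the pre-labeling $g_{a^{(i)}}(v_s)=0$ for $s<i$, i.e. that step (3), read for a vertex $v_s$ with $s<i$, would also have produced $0$. The paragraph immediately following Algorithm~\ref{4alg1} already carries out this check: for $s<i$, the trail $\langle l_{v_sv_{k_j}}, l_{v_{k_j}v_i}\rangle$ is a zero trail of $v_i$, and since $v_{k_j}v_i \notin E(H_{a^{(i)}})$, Remark~\ref{2rem3} forces $v_sv_{k_j}\in E(H_{a^{(i)}})$ for every $j$, so the ``if'' clause of step (3) applies and gives $g_{a^{(i)}}(v_s)=0$, matching the pre-label. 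I would invoke that observation to guarantee $g_{a^{(i)}}$ is well-defined as a single vertex labeling, and then the edge-by-edge verification above — which is really just one application of Proposition~\ref{2prop2} uniformly across $E(H_{a^{(i)}})$ — completes the proof. No case analysis on the endpoints' labels is actually required once one observes that the set of possible differences is $\{0,\pm\prod a^{(i)}\cdot\LL_i\}$ and that every edge label of $H_{a^{(i)}}$ divides $\prod a^{(i)}\cdot\LL_i$.
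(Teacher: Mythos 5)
Your argument is correct and is essentially the paper's own proof: both reduce to the observation that the difference across any edge of $H_{a^{(i)}}$ is $0$ or $\pm\prod a^{(i)}\cdot \LL_i$ and that every edge label of $H_{a^{(i)}}$ divides $\prod a^{(i)}\cdot \LL_i$ by Proposition~\ref{2prop2}. The extra well-definedness check you include for the pre-labeled vertices $v_s$ with $s<i$ is the same observation the paper makes in the paragraph following Algorithm~\ref{4alg1}, so nothing is missing.
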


\begin{proof} Let $uv \in E(H_{a ^{(i)}})$. If $g_{a ^{(i)}} (u) - g_{a ^{(i)}} (v) = 0$, then the spline condition holds. Assume that $g_{a ^{(i)}} (u) - g_{a ^{(i)}} (v) = \prod a ^{(i)} \cdot \LL_i$. By Proposition~\ref{2prop2}, $l_{uv}$ divides $\prod a ^{(i)} \cdot \LL_i$ and the spline condition holds again. Hence, $g_{a ^{(i)}} \in R_{(H_{a ^{(i)}} , \alpha)}$.
\end{proof}

We state the main result of Algorithm~\ref{4alg1} as follows.

\begin{theorem} The vertex labeling $g_{a ^{(i)}}$ is a spline in $R_{(K_n , \alpha)}$.
\label{4thm2}
\end{theorem}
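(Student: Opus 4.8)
The plan is to extend Proposition~\ref{4prop3}, which already establishes that $g_{a^{(i)}}$ is a spline on the subgraph $H_{a^{(i)}}$, to a spline on the full complete graph $K_n$. Since $g_{a^{(i)}}$ takes only the two values $0$ and $\prod a^{(i)} \cdot \LL_i$, the spline condition on an arbitrary edge $uv \in E(K_n)$ is automatic whenever $g_{a^{(i)}}(u) = g_{a^{(i)}}(v)$; so the real content is to check edges $uv$ with $g_{a^{(i)}}(u) \neq g_{a^{(i)}}(v)$, i.e. edges on which the difference is $\pm \prod a^{(i)} \cdot \LL_i$, and to show that $l_{uv}$ divides this quantity. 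By Proposition~\ref{4prop3} this is already known when $uv \in E(H_{a^{(i)}})$, so I only need to handle edges $uv \notin E(H_{a^{(i)}})$ on which the two endpoints receive different labels.

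The key step is to case-split on the nature of such an edge $uv$ according to the construction in Algorithm~\ref{4alg1}. First, I would dispose of edges incident to a vertex $v_s$ with $s < i$: any such vertex is labeled $0$, and for the difference to be nonzero the other endpoint must be one of $v_i, v_{k_1}, \ldots, v_{k_t}$ or a vertex $v_s'$ with $s' > i$ receiving the nonzero label. If the other endpoint is $v_i$, then $\langle l_{v_s v_i}\rangle$ is a zero edge of $v_i$, so $l_{v_s v_i}$ divides $\LL_i$ hence $\prod a^{(i)}\cdot\LL_i$. If the other endpoint is $v_{k_j}$, then $\langle l_{v_s v_{k_j}}, l_{v_{k_j} v_i}\rangle$ is a zero trail of $v_i$; since $v_{k_j} v_i \notin E(H_{a^{(i)}})$, Remark~\ref{2rem3} forces the edge $v_s v_{k_j}$ to lie in $a^{(i)}$, so $v_s v_{k_j} \in E(H_{a^{(i)}})$ and we are back in the case already covered by Proposition~\ref{4prop3} — in fact this shows no such "bad" edge arises. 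The substantive remaining cases involve only vertices with index $\geq i$: an edge $v_i v_{k_j}$ (both labeled nonzero, so difference $0$ — nothing to check), an edge $v_{k_j} v_{k_{j'}}$ (both labeled nonzero, difference $0$), an edge $v_{k_j} v_s$ with $v_i v_s \in E(H_{a^{(i)}})$, and an edge $v_s v_{s'}$ with both $v_i v_s, v_i v_{s'} \in E(H_{a^{(i)}})$ and $v_s v_{s'} \notin E(H_{a^{(i)}})$.

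For the genuinely remaining cases — $v_{k_j} v_s$ and $v_s v_{s'}$ — the strategy is to exhibit, in each case where the endpoints carry different labels, a zero trail of $v_i$ whose edges all fail to lie in $a^{(i)}$ except possibly the edge $uv$ itself, and then invoke minimality of $a^{(i)}$: minimality says that deleting an edge from $H_{a^{(i)}}$ produces a zero trail none of whose edges lies in $a^{(i)}$, which is exactly the leverage needed to conclude that such a configuration cannot occur, or else that $l_{uv}$ must divide $\prod a^{(i)}\cdot \LL_i$ via Proposition~\ref{2prop2} or Proposition~\ref{2prop3}. Concretely, for an edge $v_{k_j} v_s$ with $g_{a^{(i)}}(v_{k_j}) = \prod a^{(i)}\cdot\LL_i$ and $g_{a^{(i)}}(v_s) = 0$: by rule (3), $g_{a^{(i)}}(v_s) = 0$ means $v_s v_{k_{j'}} \in E(H_{a^{(i)}})$ for all $j'$, in particular $v_s v_{k_j} \in E(H_{a^{(i)}})$, contradicting $v_{k_j} v_s \notin E(H_{a^{(i)}})$; so the difference is in fact $0$. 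For an edge $v_s v_{s'}$ with both incident to $v_i$ inside $H_{a^{(i)}}$ but $v_s v_{s'} \notin E(H_{a^{(i)}})$ and with different labels, say $g_{a^{(i)}}(v_s) = \prod a^{(i)}\cdot\LL_i$ and $g_{a^{(i)}}(v_{s'}) = 0$: then by rule (3), $v_{s'} v_{k_{j'}} \in E(H_{a^{(i)}})$ for all $j'$, while $v_s v_{k_{j'}} \notin E(H_{a^{(i)}})$ for some $j'$; I would then trace the zero trail $\langle l_{v_i v_s}, l_{v_s v_{s'}}, l_{v_{s'} v_i}\rangle$ of $v_i$ — its outer two edges lie in $H_{a^{(i)}}$, so by the minimality description applied after deleting an appropriate edge, the middle edge $v_s v_{s'}$ together with this trail gives the needed divisibility of $l_{v_s v_{s'}}$ into $\prod a^{(i)}\cdot \LL_i$.

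The main obstacle I anticipate is precisely this last case: cleanly extracting from the minimality of $a^{(i)}$ the statement that $l_{uv}$ divides $\prod a^{(i)}\cdot \LL_i$ for edges $uv$ outside $H_{a^{(i)}}$ joining two nonzero-or-mixed vertices of index $\geq i$. The bookkeeping of "which vertices get label $0$ versus $\prod a^{(i)}\cdot\LL_i$ under rule (3), and how this interacts with the $v_{k_j}$'s" is where errors are easy to make, and I expect that proving the impossibility (or the divisibility) in the $v_s v_{s'}$ case requires carefully chaining the defining property of minimal elements — that removing any edge of $H_{a^{(i)}}$ exposes a zero trail avoiding $a^{(i)}$ — with Remark~\ref{2rem3} to pin down exactly which edge of that exposed zero trail must still lie in $a^{(i)}$, forcing a contradiction with $uv \notin E(H_{a^{(i)}})$. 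Once the case analysis is organized so that every mixed-label edge is either in $H_{a^{(i)}}$ (handled by Proposition~\ref{4prop3}) or leads to a contradiction, the theorem follows.
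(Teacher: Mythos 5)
Your overall strategy --- reduce to edges $uv \notin E(H_{a^{(i)}})$ with mixed labels and derive a contradiction from Remark~\ref{2rem3} and the minimality of $a^{(i)}$ --- is the paper's strategy, and several of your sub-cases (the zero edge to $v_i$, the edge $v_s v_{k_j}$ with $s<i$, the vacuity of the mixed-label $v_{k_j}v_s$ case) are handled correctly. But there are two genuine gaps. First, you enumerate three possible nonzero endpoints for an edge leaving a pre-labeled zero vertex $v_{s'}$ ($s'<i$) --- namely $v_i$, some $v_{k_j}$, or a vertex $v_s$ with $s>i$, $v_i v_s \in E(H_{a^{(i)}})$, and nonzero label from rule (3) --- and then silently drop the third. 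That case is not vacuous: such an edge $v_s v_{s'}$ need not lie in $E(H_{a^{(i)}})$, and the argument the paper uses is that rule (3) supplies a vertex $v_k$ with $v_i v_k, v_k v_s \notin E(H_{a^{(i)}})$, so that $< l_{v_i v_k}, l_{v_k v_s}, l_{v_s v_{s'}} >$ is a zero trail of $v_i$ none of whose edges lies in $a^{(i)}$, contradicting Remark~\ref{2rem3}. Your proposal never produces this trail.

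Second, in the remaining case $v_s v_{s'}$ with $s,s'>i$, the object you propose to use, $< l_{v_i v_s}, l_{v_s v_{s'}}, l_{v_{s'} v_i} >$, is not a zero trail at all: a zero trail of $v_i$ must terminate at a vertex $v_j$ with $j<i$, whereas your walk returns to $v_i$. So the minimality property (deleting an edge of $H_{a^{(i)}}$ exposes a zero trail avoiding $a^{(i)}$) cannot be applied to it, and you explicitly flag that you do not know how to close this step. The paper's argument here is different and essential: from $g_{a^{(i)}}(v_s)=\prod a^{(i)}\cdot \LL_i$ and $v_i v_s \in E(H_{a^{(i)}})$ one gets a vertex $v_k$ with $v_i v_k, v_k v_s \notin E(H_{a^{(i)}})$; from $g_{a^{(i)}}(v_{s'})=0$ one gets $v_{s'} v_k \in E(H_{a^{(i)}})$; minimality yields a zero trail whose \emph{only} edge lying in $a^{(i)}$ is $v_{s'} v_k$; and replacing $l_{v_{s'} v_k}$ by the pair $l_{v_k v_s}, l_{v_s v_{s'}}$ produces a zero trail avoiding $a^{(i)}$ entirely, contradicting Remark~\ref{2rem3}. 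Without this replacement trick (or an equivalent), the hardest case of the theorem remains unproved.
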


\begin{proof} Algorithm~\ref{4alg1} covers all vertices $v_j$ with $j > i$ since the graph is complete. Let $uv \in E(K_n)$. If $uv \in E(H_{a ^{(i)}})$, the spline condition holds by Proposition~\ref{4prop3}. Let $uv \not\in E(H_{a ^{(i)}})$ and assume that the spline condition does not hold on $uv$. Without loss of generality, say $g_{a ^{(i)}} (u) = \prod a ^{(i)} \cdot \LL_i$ and $g_{a ^{(i)}} (v) = 0$ where $u = v_s$ and $v = v_{s'}$. Note that $s > i$ because of the label $g_{a ^{(i)}} (u)$. We deal with the rest of the proof in two cases by comparing the indices $s'$ and $i$.

Assume that $s' < i$, then $v v_i \not\in E(H_{a ^{(i)}})$ by the construction of $a ^{(i)}$. If $u v_i \not\in E(H_{a ^{(i)}})$, we have the zero trail of $v_i$ given by $< l_{v_i u} , l_{u v} >$ on which no edge lie in $a ^{(i)}$, and this contradicts to Remark~\ref{2rem3}. If $u v_i \in E(H_{a ^{(i)}})$, we have another vertex $v_k$ with $k > i$ such that $v_i v_k , v_k u \not\in E(H_{a ^{(i)}})$ since $g_{a ^{(i)}} (u) = \prod a ^{(i)} \cdot \LL_i$. In this case, no edge on the zero trail $< l_{v_i v_k}, l_{v_k u} , l_{u v} >$ of $v_i$ lie in $a ^{(i)}$, a contradiction to Remark~\ref{2rem3} again.
	
Suppose that $s' > i$, then $v v_i \in E(H_{a ^{(i)}})$ since $g_{a ^{(i)}} (v) = 0$. Moreover, $u v_i \in E(H_{a ^{(i)}})$ since the other case would contradict the label of $v$. By reason of $u v_i \in E(H_{a ^{(i)}})$ and $g_{a ^{(i)}} (u) = \prod a ^{(i)} \cdot \LL_i$, there exists a vertex $v_k$ with $k > i$ such that $v_k v_i \not\in E(H_{a ^{(i)}})$ and $u v_k \not\in E(H_{a ^{(i)}})$. Since $g_{a ^{(i)}} (v) = 0$, we say $v v_k \in E(H_{a ^{(i)}})$. As $a ^{(i)}$ is minimal, there exists a zero trail $\tp ^{(v_i , 0)}$ such that the only edge on $\tp ^{(v_i , 0)}$ that lie in $a^ {(i)}$ is $v v_k$. However, replacing $l_{v v_k}$ with $l_{v_k u} , l_{uv}$ on $\tp ^{(v_i , 0)}$ yields another zero trail $\tp* ^{(v_i , 0)}$ such that no edge on $\tp* ^{(v_i , 0)}$ lies in $a ^{(i)}$, which contradicts to Remark~\ref{2rem3}. 

Since we have contradiction in all possible cases, the assumption is wrong, and the spline condition holds on $uv$. Thus, $g_{a ^{(i)}} \in R_{(K_n , \alpha)}$.
\end{proof}

We denote the vertex label $g_{a ^{(i)}}$ by $F_{a ^{(i)}}$ in the rest of the paper. We run Algorithm~\ref{4alg1} in the following example.

\begin{example} Consider the edge labeled $(K_5 , \alpha)$ in Figure~\ref{fig6}. A minimal element $a ^{(2)} \in \bigtimes D^i$ is given by $l (a ^{(2)}) = \{ l_2 , l_4 , l_7 , l_5 , l_9 , l_{10} \}$ in terms of edge labels. The subgraph $H_{a ^{(2)}}$ is presented as solid lines, and the other edges are denoted by dashed lines below.

\begin{figure}[H]
\centering
\begin{tikzpicture}
\begin{scope}[every node/.style={circle,thick,draw}]
    \node (A) at (2,4) {$v_1$};
    \node (B) at (0,0) {$v_2$};
    \node (C) at (4,0) {$v_3$};
    \node (D) at (2,1.5) {$v_4$};
    \node (E) at (-1,5) {$v_5$};
\end{scope}
\begin{scope}[>={Stealth[black]},
              every node/.style={fill=white,circle},
              every edge/.style={draw=black,very thick}]
    \path[dashed] (A) edge node {$l_1$} (B);
    \path[dashed] (A) edge node {$l_3$} (C);
    \path (A) edge node {$l_4$} (D);
    \path (A) edge node {$l_7$} (E);          
    \path (B) edge node {$l_2$} (C);
    \path (B) edge node {$l_5$} (D);
    \path[dashed] (B) edge node {$l_8$} (E);
    \path[dashed] (C) edge node {$l_6$} (D);
    \path (C) edge[bend right=75] node {$l_9$} (E);
    \path (D) edge node {$l_{10}$} (E);
\end{scope}
\end{tikzpicture}
\caption{Edge labeled $(K_5 , \alpha)$}
\label{fig6}
\end{figure}
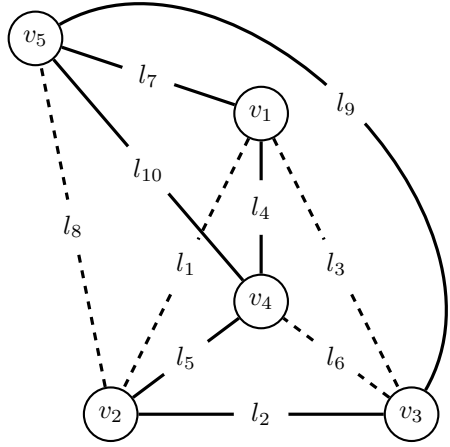

We execute Algorithm~\ref{4alg1} as follows.

\begin{enumerate}
	\item $g_{a ^{(2)}} (v_1) = 0$ as a pre-labeled vertex.
	\item Set $g_{a ^{(2)}} (v_2) = \prod a ^{(2)} \cdot \LL_2$.
	\item Set $g_{a ^{(2)}} (v_5) = \prod a ^{(2)} \cdot \LL_2$ since $v_5 v_2 \not\in E(H_{a ^{(2)}})$.
	\item Set $g_{a ^{(2)}} (v_3) = g_{a ^{(2)}} (v_4) = 0$ since $v_5 v_3 , v_5 v_4 \in E(H_{a ^{(2)}})$.
\end{enumerate}
Therefore, $g_{a ^{(2)}} = F_{a ^{(2)}} = \Big( 0, \prod a ^{(2)} \cdot \LL_2 , 0 , 0 , \prod a ^{(2)} \cdot \LL_2 \Big)$. As an application of Theorem~\ref{4thm1}, $F_{a ^{(2)}}$ induces the spline $F_{{a^*} ^{(2)}} = \Big( 0, \prod {a^*} ^{(2)} \cdot \LL_2 , 0 ,  0 , \prod {a^*} ^{(2)} \cdot \LL_2 \Big)$ for $a ^{(2)} \subset {a^*} ^{(2)}$ with $l ({a^*} ^{(2)}) = \{ l_2 , l_4 , l_7 , l_5 , l_9 , l_{10} , l_3 \}$.
\label{4ex1}
\end{example}

\begin{remark} Since $\mathfrak{D}_n = \emptyset$ by Remark~\ref{2rem2}, we fix $F_{a ^{(n)}} = (0,0, \ldots , \LL_n)$. We conclude that $F_{a ^{(n)}} \in R_{(K_n , \alpha)}$ by Proposition~\ref{2prop3}.
\end{remark}

\begin{remark} The spline $F_{a ^{(i)}} \in R_{(K_n , \alpha)}$ has at least $i$ zero components. To see this, notice that $F_{a ^{(i)}} (v_j) = 0$ for all $j < i$ where $2 \leq i \leq n-1$. In case of $i = n$, we have $F_{a ^{(n)}} = (0,0, \ldots , \LL_n)$.
\label{4rem1}
\end{remark}

We summarize Proposition~\ref{4prop2}, Theorem~\ref{4thm1}, Theorem~\ref{4thm2} and Remark~\ref{4rem1} by the following corollary.

\begin{corollary} Let $(K_n , \alpha)$ be an edge labeled complete graph and $R$ be a GCD domain. For any element $a ^{(i)} \in \bigtimes D^i$ with $i = 2 \leq i \leq n-1$, there corresponds a spline $F_{a ^{(i)}} \in R_{(K_n , \alpha)}$ with at least $i$ zero components.
\label{4cor1}
\end{corollary}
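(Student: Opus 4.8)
The plan is to prove Corollary~\ref{4cor1} by assembling the three construction results that precede it, splitting on the structure of the element $a^{(i)}$. First I would dispose of the extreme indices: for $i=n$ the set $\mathfrak{D}_n$ is empty by Remark~\ref{2rem2}, so there is a single element $a^{(n)}$ with $\prod a^{(n)}$ the empty product, and $F_{a^{(n)}}=(0,\ldots,0,\LL_n)$ lies in $R_{(K_n,\alpha)}$ by Proposition~\ref{2prop3}; this spline has $n-1$ zero components, which is at least $i=n$ minus one — so for the uniform statement I would phrase the conclusion as ``at least $i$ zero components'' only for $2\le i\le n-1$ and record the $i=n$ case separately exactly as in Remark~\ref{4rem1}. (Here I am following the paper's own phrasing of Remark~\ref{4rem1}, which already treats $i=n$ as a special line.)

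For the generic range $2\le i\le n-1$, fix $a^{(i)}\in\bigtimes D^i$. The key dichotomy is whether every edge $uv_i$ incident to $v_i$ has its label lying in $a^{(i)}$. If yes, Proposition~\ref{4prop2} directly produces the spline $g_{a^{(i)}}$ supported only on $v_i$, which has exactly $n-1\ge i$ zero components. If no, then I would pass to a minimal element: since $\bigtimes D^i$ is finite and ordered by the containment $a_1^{(i)}\subset a_2^{(i)}$ defined via $l(\cdot)$, there exists a minimal $b^{(i)}\subset a^{(i)}$. Apply Algorithm~\ref{4alg1} to $b^{(i)}$ to obtain $g_{b^{(i)}}$; by Proposition~\ref{4prop3} and Theorem~\ref{4thm2} this is a genuine spline in $R_{(K_n,\alpha)}$ taking values in $\{0,\prod b^{(i)}\cdot\LL_i\}$, and by the discussion following Algorithm~\ref{4alg1} together with step~(1) of the algorithm it vanishes on all $v_j$ with $j<i$, giving at least $i-1$ zeros; since step~(1) also shows $g_{b^{(i)}}(v_i)\neq 0$ we still need step~(3) to contribute the $i$-th zero — and indeed when $t\ge 1$ there is always at least one vertex $v_s$, $s>i$, with $v_iv_s\in E(H_{b^{(i)}})$ and $v_sv_{k_1}\in E(H_{b^{(i)}})$ (otherwise $b^{(i)}$ could not meet the relevant zero trails, contradicting Remark~\ref{2rem3}), so $g_{b^{(i)}}(v_s)=0$ and we reach $i$ zeros. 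Finally Theorem~\ref{4thm1} lifts $F_{b^{(i)}}:=g_{b^{(i)}}$ to $F_{a^{(i)}}$ on the larger $a^{(i)}$, replacing the nonzero value by $\prod a^{(i)}\cdot\LL_i$ and keeping every zero a zero, so the zero count is preserved.

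The main obstacle I anticipate is the zero-count bookkeeping in the minimal case, specifically guaranteeing the $i$-th zero component rather than only $i-1$. The vertices $v_j$ with $j<i$ are pre-labeled zero, which is only $i-1$ of them, and $v_i$ itself is forced nonzero, so the extra zero must come from among the $v_s$ with $s>i$. One must argue that not every such $v_s$ can receive the nonzero value $\prod a^{(i)}\cdot\LL_i$: if $t\ge 1$ then $v_{k_1}$ exists with $v_iv_{k_1}\notin E(H_{a^{(i)}})$, and completeness of $K_n$ forces $v_iv_s\in E(H_{a^{(i)}})$ for every other neighbour $v_s$; among these, any $v_s$ with $v_sv_{k_1}\in E(H_{a^{(i)}})$ is set to $0$ by step~(3), and such an $s$ must exist because otherwise the trail $\langle l_{v_iv_s}, l_{v_sv_{k_1}}, l_{v_{k_1}v_i}\rangle$-type configurations would evade $a^{(i)}$, contradicting Remark~\ref{2rem3}; if instead $t=0$ we are back in the situation of Proposition~\ref{4prop2} with $n-1\ge i$ zeros. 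I would write this last paragraph carefully, perhaps isolating the ``$t\ge 1\Rightarrow$ some $v_s$ is zeroed'' claim as a short sublemma, since it is the one place where the count could plausibly fail and where the completeness hypothesis and minimality are both genuinely used.
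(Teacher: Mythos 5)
Your overall route is the one the paper itself takes: split on whether every label $l_{uv_i}$ lies in $a^{(i)}$, use Proposition~\ref{4prop2} in the first case, and otherwise pass to a minimal $b^{(i)}\subset a^{(i)}$, run Algorithm~\ref{4alg1}, invoke Theorem~\ref{4thm2}, and lift back to $a^{(i)}$ by Theorem~\ref{4thm1} (the lift visibly preserves zero components). Up to that point your write-up matches the intended argument, which the paper compresses into the sentence ``we summarize Proposition~\ref{4prop2}, Theorem~\ref{4thm1}, Theorem~\ref{4thm2} and Remark~\ref{4rem1}.''

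The genuine problem is your sublemma asserting that when $t\ge 1$ some vertex $v_s$ with $s>i$ must receive the label $0$ in step~(3). That claim is false, and the argument you give for it does not work: the configuration $\langle l_{v_iv_s}, l_{v_sv_{k_1}}, l_{v_{k_1}v_i}\rangle$ is a closed walk returning to $v_i$, not a zero trail (a zero trail must end at a vertex $v_j$ with $j<i$), so Remark~\ref{2rem3} imposes nothing on it. Concretely, take $(K_4,\alpha)$ and $i=2$: the zero trails of $v_2$ of length greater than $1$ are $\langle l_{23},l_{13}\rangle$, $\langle l_{24},l_{14}\rangle$, $\langle l_{23},l_{34},l_{14}\rangle$ and $\langle l_{24},l_{34},l_{13}\rangle$, and the choice with $l(a^{(2)})=\{l_{13},l_{14}\}$ is a minimal element with $E(H_{a^{(2)}})=\{v_1v_3,\, v_1v_4\}$. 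Then $t=2$ with $k_1=3$, $k_2=4$, step~(3) applies to no vertex at all, and Algorithm~\ref{4alg1} outputs $\big(0,\ \prod a^{(2)}\cdot\LL_2,\ \prod a^{(2)}\cdot\LL_2,\ \prod a^{(2)}\cdot\LL_2\big)$ --- a valid spline with exactly one zero component, not two. So the literal count ``at least $i$ zero components'' can fail; what the construction guarantees, and what the paper's Remark~\ref{4rem1} actually justifies (it, too, only exhibits $i-1$ zeros), is that the first $i-1$ components $F_{a^{(i)}}(v_1),\ldots,F_{a^{(i)}}(v_{i-1})$ vanish. That weaker statement is also all that is used downstream: Theorem~\ref{4thm3} needs exactly the triangularity coming from $F_{a^{(i)}}(v_j)=0$ for $j<i$. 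The correct repair is therefore not to hunt for an $i$-th zero but to restate the conclusion as ``$F_{a^{(i)}}(v_j)=0$ for all $j<i$''; with that change the rest of your argument goes through verbatim.
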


Corollary~\ref{4cor1} answers Problem~\ref{4prob1} positively for complete graphs. The main result of the section is stated below.

\begin{theorem} Let $\{ F_1 , F_2, \ldots , F_n \} \subset R_{(K_n , \alpha)}$. If $\{ F_1 , F_2 , \ldots , F_n \}$ forms a basis for $R_{(K_n , \alpha)}$, then $\big\vert F_1 \squad F_2 \squad \ldots \squad F_n \big\vert = r \cdot Q_{K_n}$ where $r \in R$ is a unit.
\label{4thm3}
\end{theorem}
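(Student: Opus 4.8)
The plan is to sandwich $\bigl| F_1 \ \cdots \ F_n \bigr|$ between $Q_{K_n}$ and itself up to units. Theorem~\ref{3thm1} already supplies one half: $Q_{K_n}$ divides $\bigl| F_1 \ \cdots \ F_n \bigr|$. So it will suffice to prove the reverse divisibility, that $\bigl| F_1 \ \cdots \ F_n \bigr|$ divides $Q_{K_n}$; since $R$ is an integral domain, the two are then associates and we obtain $\bigl| F_1 \ \cdots \ F_n \bigr| = r \cdot Q_{K_n}$ with $r$ a unit.

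For the reverse divisibility I would invoke Proposition~\ref{3prop1}: because $\{ F_1 , \ldots , F_n \}$ is a basis, $\bigl| F_1 \ \cdots \ F_n \bigr|$ divides the spline determinant of \emph{every} $n$-tuple of splines in $R_{(K_n,\alpha)}$, so the idea is to feed in comparison tuples whose determinants are computable. Fix, for each $i$ with $2 \le i \le n-1$, an arbitrary element $a^{(i)} \in \bigtimes D^i$, and take the $n$ splines
\begin{displaymath}
F^{(1)} = (1, 1, \ldots, 1), \qquad F_{a^{(2)}}, \ \ldots, \ F_{a^{(n-1)}}, \qquad F_{a^{(n)}} = (0, \ldots, 0, \LL_n),
\end{displaymath}
which lie in $R_{(K_n,\alpha)}$ by Corollary~\ref{4cor1}, the remark fixing $F_{a^{(n)}}$, and the trivial fact that the all-ones vector is a spline (recall $\LL_1 = 1$). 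Since $F_{a^{(i)}}(v_j) = 0$ whenever $j < i$, reordering the rows of this spline matrix to the vertex order $v_1, v_2, \ldots, v_n$ makes it lower triangular with diagonal entries $\LL_1 = 1$, $\prod a^{(2)} \cdot \LL_2$, $\ldots$, $\prod a^{(n-1)} \cdot \LL_{n-1}$, $\LL_n$. Hence
\begin{align*}
\bigl| F^{(1)} \ F_{a^{(2)}} \ \cdots \ F_{a^{(n)}} \bigr| &= \pm \Bigl( \prod_{i=1}^n \LL_i \Bigr) \cdot \prod a^{(2)} \cdot \prod a^{(3)} \cdots \prod a^{(n-1)} \\
&= \pm\, Q_{K_n} \cdot \bigl( \prod a^{(2)} \cdot \prod a^{(3)} \cdots \prod a^{(n-1)} \bigr).
\end{align*}

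By Proposition~\ref{3prop1}, $\bigl| F_1 \ \cdots \ F_n \bigr|$ divides $Q_{K_n} \cdot \prod a^{(2)} \cdot \prod a^{(3)} \cdots \prod a^{(n-1)}$ for \emph{every} choice of the $a^{(i)}$. Taking the greatest common divisor over all such choices and using the distributivity of $\gcd$ over products (Lemma~\ref{2lem1}) together with Proposition~\ref{2prop1}, which gives $\bigl( \mathbb{A} \bigr) = 1$, we conclude that $\bigl| F_1 \ \cdots \ F_n \bigr|$ divides $Q_{K_n} \cdot \bigl( \mathbb{A} \bigr) = Q_{K_n}$. Combined with Theorem~\ref{3thm1}, this finishes the argument.

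I expect the substantive content to sit entirely upstream, in Corollary~\ref{4cor1}: the existence over an arbitrary GCD domain (via Algorithm~\ref{4alg1} and Theorems~\ref{4thm1}, \ref{4thm2}) of the two-valued flow-up splines $F_{a^{(i)}}$ with the prescribed vanishing pattern and with $F_{a^{(i)}}(v_i) = \prod a^{(i)} \cdot \LL_i$. Granting that input, what remains is a triangular determinant evaluation and a $\gcd$ reduction; the only points requiring care are that Theorem~\ref{3thm1} and Proposition~\ref{3prop1} be applied with the same spline-matrix ordering convention, and that the final step genuinely uses the GCD-domain hypothesis (so that a common divisor of all the products $Q_{K_n} \cdot \prod a^{(2)} \cdot \prod a^{(3)} \cdots \prod a^{(n-1)}$ divides their gcd). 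Note that this theorem is only the ``basis $\Rightarrow$ determinant'' direction; the converse is Proposition~\ref{3prop2}.
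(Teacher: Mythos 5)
Your proposal is correct and follows essentially the same route as the paper: both use Theorem~\ref{3thm1} for one divisibility, then compare against the triangular spline matrix $\big( 1 \squad F_{a^{(2)}} \squad \ldots \squad F_{a^{(n)}} \big)$ via Proposition~\ref{3prop1} and reduce over all choices of $a^{(i)}$ using Proposition~\ref{2prop1} (the paper phrases this as showing the cofactor $r$ divides $\big( \mathbb{D} \big) = 1$ rather than as a reverse divisibility of $Q_{K_n}$, but the two formulations are equivalent). As a minor note, the paper's own proof contains a typo, citing Proposition~\ref{5prop1} where Proposition~\ref{3prop1} is meant; your citation is the correct one.
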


\begin{proof} Assume that $\{ F_1 , F_2 , \ldots , F_n \}$ forms a basis for $R_{(K_n , \alpha)}$. By Theorem~\ref{3thm1}, $Q_{K_n}$ divides $\big\vert F_1 \squad F_2 \squad \ldots \squad F_n \big\vert$, say $\big\vert F_1 \squad F_2 \squad \ldots \squad F_n \big\vert = r \cdot Q_{K_n}$. We show that $r$ is a unit.

By Corollary~\ref{4cor1}, there exists a spline $F_{a ^{(i)}} \in R_{(K_n , \alpha)}$ with at least $i$ zero components for any $a ^{(i)} \in \bigtimes D^i$ and $2 \leq i \leq n-1$. Construct the spline matrix 
\begin{displaymath}
A = \big( 1 \squad F_{a ^{(2)}} \squad F_{a ^{(3)}} \squad \ldots \squad F_{a ^{(n-1)}} \squad F_{a ^{(n)}} \big)
\end{displaymath}
where $1 = (1,\ldots,1)$ is the trivial spline. The matrix $A$ is an upper triangular matrix since the first $i-1$ components of $F_{a ^{(i)}}$ are zero. Therefore, 
\begin{align*}
\big\vert A \big\vert &= \LL_2 \cdot \prod a ^{(2)} \cdot \LL_3 \cdot \prod a ^{(3)} \cdots \LL_{(n-1)} \cdot \prod a ^{(n-1)} \cdot \LL_n \\ &= \prod a ^{(2)} \cdot \prod a ^{(3)} \cdots \prod a ^{(n-1)} \cdot Q_{K_n}.
\end{align*}
By Proposition~\ref{5prop1}, $\big\vert F_1 \squad F_2 \squad \ldots \squad F_n \big\vert = r \cdot Q_{K_n}$ divides $\big\vert A' \big\vert$ and hence, $r$ divides $\prod a ^{(2)} \cdot \prod a ^{(3)} \cdots \prod a ^{(n-1)}$. Since $a ^{(i)} \in \bigtimes D^i$ are arbitrary, we conclude that $r$ divides each element of the set 
\begin{displaymath}
\mathbb{D} = \Big\{ \prod a ^{(2)} \cdot \prod a ^{(3)} \cdots \prod a ^{(n-1)} \text{ $\vert$ } a ^{(i)} \in \bigtimes D^i , \squad 2 \leq i \leq n-1 \Big\}.
\end{displaymath}
Thus, $r$ divides the greatest common divisor $\big( \mathbb{D} \big)$ which is equal to $1$ by Proposition~\ref{2prop1} and so, $r$ is a unit.
\end{proof}

We give the basis condition for $R_{(K_n , \alpha)}$ over GCD domains by gathering Proposition~\ref{4prop2} and Theorem~\ref{4thm3}.

\begin{theorem} Let $\{ F_1 , F_2, \ldots , F_n \} \subset R_{(K_n , \alpha)}$  and $R$ be a GCD domain. Then $\{ F_1 , F_2 , \ldots , F_n \}$ forms a basis for $R_{(K_n , \alpha)}$ if and only if $\big\vert F_1 \squad F_2 \squad \ldots \squad F_n \big\vert = r \cdot Q_{K_n}$ where $r \in R$ is a unit.
\label{4thm4}
\end{theorem}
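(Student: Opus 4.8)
The plan is to combine the two directions that have already been established in the excerpt. Theorem~\ref{4thm4} is an ``if and only if'' statement, so I would prove each implication separately, and in fact both directions are essentially corollaries of results proved just above it. For the ``only if'' direction (basis $\Rightarrow$ determinant equals a unit times $Q_{K_n}$), this is precisely the content of Theorem~\ref{4thm3}, so I would simply cite it. For the ``if'' direction (determinant equals a unit times $Q_{K_n}$ $\Rightarrow$ basis), I would invoke Proposition~\ref{3prop2}, which states exactly that whenever $\vert F_1 \squad F_2 \squad \ldots \squad F_n \vert = r \cdot Q_G$ with $r$ a unit, the set $\{F_1,\ldots,F_n\}$ forms a basis for $R_{(G,\alpha)}$; this holds for an arbitrary edge labeled graph, in particular for $K_n$.

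Concretely, the proof I would write is short. First I would say: suppose $\{F_1,\ldots,F_n\}$ forms a basis for $R_{(K_n,\alpha)}$; then by Theorem~\ref{4thm3} we have $\vert F_1 \squad F_2 \squad \ldots \squad F_n \vert = r \cdot Q_{K_n}$ for some unit $r \in R$, which is the forward implication. Conversely, suppose $\vert F_1 \squad F_2 \squad \ldots \squad F_n \vert = r \cdot Q_{K_n}$ for some unit $r$; then applying Proposition~\ref{3prop2} with $G = K_n$ immediately gives that $\{F_1,\ldots,F_n\}$ is a basis for $R_{(K_n,\alpha)}$. Combining the two implications yields the equivalence.

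There is essentially no genuine obstacle here, since all the real work has been done in assembling Corollary~\ref{4cor1} (which produces the needed ``special'' splines $F_{a^{(i)}}$ via Algorithm~\ref{4alg1} and Theorems~\ref{4thm1}, \ref{4thm2}), Theorem~\ref{3thm1} (divisibility of any spline determinant by $Q_G$), and Proposition~\ref{3prop2} (the converse criterion). The one point I would be careful about is making sure the hypotheses line up: Proposition~\ref{3prop2} is stated for a general edge labeled graph, so it does apply to $K_n$ without modification, and Theorem~\ref{4thm3} supplies exactly the other half. If anything, the ``hard part'' was already resolved upstream in Theorem~\ref{4thm3}, whose proof relies on the nontrivial fact (Proposition~\ref{2prop1}) that the relevant gcd $\big(\mathbb{D}\big)$ equals $1$; at the level of Theorem~\ref{4thm4} itself, the argument is simply a two-line synthesis. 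I would therefore present it as a brief corollary-style proof citing Theorem~\ref{4thm3} and Proposition~\ref{3prop2}.
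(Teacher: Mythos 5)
Your proof is correct and matches the paper's intent exactly: the paper states the theorem as a combination of Theorem~\ref{4thm3} (the forward direction) with the converse determinant criterion, which is Proposition~\ref{3prop2} applied to $G = K_n$. (The paper's text actually cites Proposition~\ref{4prop2} at this point, which appears to be a typo for Proposition~\ref{3prop2}; your reading is the right one.)
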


%%%%%%%%%%%%%%%%%%%%%%%%%%%%%%%%%%%%%%%%%%%%%%%%%%%%%%%%%
%%%%%%%%%% Basis Condition on Arbitrary Graphs Section
%%%%%%%%%%%%%%%%%%%%%%%%%%%%%%%%%%%%%%%%%%%%%%%%%%%%%%%%%

\section{Basis Condition on Arbitrary Graphs}\label{bcoag}

In this section, we extend Theorem~\ref{4thm4} to arbitrary graphs and obtain a general basis condition over GCD domains. In order to do this, we define the completion of an edge labeled graph. Given an edge labeled graph $(G,\alpha)$, we define the completion of it, denoted by $(K_G , \alpha^*)$, by adding the missing edges $uv$ and labeling them as $\alpha^* (uv) = 1$. If $uv \in E(G)$, then we set $\alpha^* (uv) = \alpha(uv)$.

The following proposition states that the completion does not affect the set of splines.

\begin{proposition} Let $(G,\alpha)$ be an edge labeled graph and $(K_G , \alpha^*)$ be the completion of it. Then $R_{(G,\alpha)} = R_{(K_G , \alpha^*)}$.
\label{5prop1}
\end{proposition}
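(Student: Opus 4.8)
The plan is to show the two sets of vertex labelings are literally equal by checking the defining spline conditions edge by edge. The containment $R_{(K_G, \alpha^*)} \subseteq R_{(G,\alpha)}$ is immediate: if $F$ is a spline on $(K_G, \alpha^*)$, then for every edge $uv \in E(G)$ we have $\alpha^*(uv) = \alpha(uv)$, so the condition $F(u) - F(v) \in \alpha(uv)$ holds, and since $F \in R^n$ already, $F \in R_{(G,\alpha)}$. Note this direction does not even use that $R$ is a GCD domain.

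For the reverse containment $R_{(G,\alpha)} \subseteq R_{(K_G, \alpha^*)}$, take $F \in R_{(G,\alpha)}$ and let $uv \in E(K_G)$. If $uv \in E(G)$, then $\alpha^*(uv) = \alpha(uv)$ and the spline condition for $F$ on $(G,\alpha)$ gives $F(u) - F(v) \in \alpha(uv) = \alpha^*(uv)$. If $uv \notin E(G)$, then by the definition of the completion $\alpha^*(uv) = \langle 1 \rangle = R$, so $F(u) - F(v) \in R = \alpha^*(uv)$ trivially. Hence $F$ satisfies the spline condition on every edge of $K_G$, i.e. $F \in R_{(K_G, \alpha^*)}$. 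Combining the two inclusions gives $R_{(G,\alpha)} = R_{(K_G, \alpha^*)}$ as sets; since both carry the componentwise $R$-module structure on $R^n$, they agree as $R$-modules as well.

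Honestly, there is no real obstacle here — the statement is essentially a definitional unwinding, and the only thing to be careful about is that the added edges are labeled by the unit ideal (equivalently by the generator $1$), which makes their spline condition vacuous. If one wanted to be thorough one could remark that the graph structure of $K_G$ (which pairs are adjacent) is irrelevant on its own; what matters is the edge labeling, and assigning the full ring $R$ as the label of an edge imposes no constraint. This is exactly why the completion is a useful device: it lets one import the complete-graph results of Section~\ref{bcocg}, in particular Theorem~\ref{4thm4}, without changing the module in question.
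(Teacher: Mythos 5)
Your proof is correct and follows essentially the same route as the paper: both containments are verified edge by edge, with the key observation that edges added in the completion carry the unit ideal, so their spline condition is vacuous. No issues.
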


\begin{proof} Let $F \in R_{(G,\alpha)}$ and $uv \in E(K_G)$. If $uv \in E(G)$, then $F(u) - F(v) \in \alpha(uv) = \alpha^* (uv)$. Otherwise, $F(u) - F(v) \in \alpha^* (uv) = 1$ and $F \in R_{(K_G , \alpha^*)}$. Conversely, if $F \in R_{(K_G , \alpha^*)}$, then $F \in R_{(G,\alpha)}$ since $(G,\alpha) \subset (K_G , \alpha^*)$. Thus, $R_{(G,\alpha)} = R_{(K_G , \alpha^*)}$.
\end{proof}

The relation between $Q_G$ and $Q_{K_G}$ is stated below.

\begin{proposition} Let $(G,\alpha)$ be an edge labeled graph and $(K_G , \alpha^*)$ be the completion of it. Then $Q_G = Q_{K_G}$.
\label{5prop2}
\end{proposition}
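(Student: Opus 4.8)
The plan is to show that the completion process leaves the quantity $Q_G = \prod_{i=1}^n \LL_i$ unchanged by proving that it leaves each factor $\LL_i$ unchanged. Recall that $\LL_i$ is the least common multiple of the gcd's of all zero trails of $v_i$, taken over the graph with vertices $v_j$, $j<i$, labeled by zero. So the core claim to establish is that, for each $i$, the set of relevant gcd-values $\big\{\big(\tp^{(v_i,0)}\big)\big\}$ (after the reduction of Remark~\ref{2rem1}) has the same least common multiple whether computed in $(G,\alpha)$ or in $(K_G,\alpha^*)$.

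First I would observe that every zero trail of $v_i$ in $G$ is also a zero trail of $v_i$ in $K_G$, and its gcd is computed from the same edge labels, so $\LL_i^{K_G}$ divides $\LL_i^G$ — wait, it is the reverse: $\LL_i$ is a least common multiple over a \emph{larger} index set in $K_G$, so a priori $\LL_i^G$ divides $\LL_i^{K_G}$. The real content is the other divisibility. The key point is that any zero trail $\tp^{(v_i,0)}$ in $K_G$ that uses one or more of the new unit-labeled edges has $\big(\tp^{(v_i,0)}\big)$ equal to the gcd of a set of elements at least one of which is a unit (the label $1$ on a new edge), hence $\big(\tp^{(v_i,0)}\big) = 1$. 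By Remark~\ref{2rem1} — since any trail is "contained" in the reduction sense by shorter trails, and more to the point a gcd equal to $1$ contributes nothing to a least common multiple — these new trails do not change $\LL_i$. More carefully: $\big[\ldots, 1, \ldots\big]$ equals $\big[\ldots\big]$ of the remaining entries, so we may discard from the defining set for $\LL_i^{K_G}$ every zero trail that meets a new edge, leaving exactly the zero trails lying entirely in $G$, i.e. the defining set for $\LL_i^G$. Hence $\LL_i^{K_G} = \LL_i^G$.

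I would organize the write-up as: (1) fix $i$ and note $\LL_i$ depends only on the collection of values $\big(\tp^{(v_i,0)}\big)$; (2) split the zero trails of $v_i$ in $K_G$ into those using a new edge and those not; (3) for the former, argue the gcd is $1$ because a unit appears among the edge labels; (4) invoke the lcm identity $[A \cup \{1\}] = [A]$ (or the boundary case $\LL_1 = 1$ if $i=1$, which is trivially unaffected) to delete those trails; (5) observe the surviving set coincides with the defining set of $\LL_i^G$, so $\LL_i^{K_G} = \LL_i^G$; (6) take the product over $i$ to conclude $Q_{K_G} = Q_G$. For the edge case $i=n$, both $\mathfrak{D}_n$ and the relevant zero-edge data are the same argument, and $\LL_n$ is a least common multiple of labels of zero edges at $v_n$ — new edges at $v_n$ are zero edges with label $1$ and again contribute nothing.

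The main obstacle — really the only subtlety — is making precise that a zero trail through a new edge contributes a gcd of $1$ and that such trails can be dropped from the lcm without loss. This needs the trivial but load-bearing facts that $\gcd$ of any set containing a unit is a unit, and $\operatorname{lcm}$ is unchanged by adjoining units; both follow from the GCD-domain axioms and are implicitly the content of Lemma~\ref{2lem1}. A minor bookkeeping point is that Remark~\ref{2rem1}'s reduction (only considering trails not containing another) is compatible with this deletion — but since we are comparing least common multiples over two nested sets that differ only by elements equal to $1$, the reduction is irrelevant to the final value and need not be tracked carefully. No genuinely hard step is expected.
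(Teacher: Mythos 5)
Your proposal is correct and follows essentially the same route as the paper's proof: every zero trail of $v_i$ through a newly added edge has a unit among its labels, hence gcd equal to $1$, and such trails do not affect the least common multiple defining $\LL_i$, so each $\LL_i$ and therefore $Q_G$ is unchanged. Your write-up is somewhat more careful about the a priori divisibility direction, the lcm-absorbs-units identity, and the boundary cases $i=1$ and $i=n$, but the underlying argument is the same.
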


\begin{proof} Fix a vertex $v_i \in V(G)$ with $i > 1$. Since $G \subset K_G$, any zero trail $\tp ^{(v_i , 0)}$ on $(G, \alpha)$ is a zero trail on $(K_G , \alpha^*)$. Assume that a zero trail $\tp_j ^{(v_i , 0)}$ on $(K_G , \alpha^*)$ contain an edge $uv$ with $\alpha^* (uv) = 1$, then $\big( \tp ^{(v_i , 0)} \big) = 1$ and such a zero trail does not effect $\LL_i$. Hence, $\LL_i$ is the same on $(G,\alpha)$ and $(K_G , \alpha^*)$, and so $Q_G = Q_{K_G}$.
\end{proof}

Consider the completion of $(G, \alpha)$. By Theorem~\ref{4thm4}, $\{ F_1 , F_2 , \ldots , F_n \}$ forms a basis for $R_{(K_G , \alpha^*)}$ if and only if $\big\vert F_1 \squad F_2 \squad \ldots \squad F_n \big\vert = r \cdot Q_{K_G}$ where $r \in R$ is a unit. Since $R_{(G,\alpha)} = R_{(K_G , \alpha^*)}$ and $Q_G = Q_{K_G}$ by Proposition~\ref{5prop1} and Proposition~\ref{5prop2}, Theorem~\ref{4thm4} extends to arbitrary graphs as follows.

\begin{theorem} Let $(G, \alpha)$ be an arbitrary graph with $n$ vertices, $\{ F_1 , F_2, \ldots , F_n \} \subset R_{(G , \alpha)}$  and $R$ be a GCD domain. Then $\{ F_1 , F_2 , \ldots , F_n \}$ forms a basis for $R_{(G , \alpha)}$ if and only if $\big\vert F_1 \squad F_2 \squad \ldots \squad F_n \big\vert = r \cdot Q_G$ where $r \in R$ is a unit.
\label{5thm1}
\end{theorem}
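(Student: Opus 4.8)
The plan is to deduce Theorem~\ref{5thm1} from the complete-graph case, Theorem~\ref{4thm4}, by passing to the completion $(K_G, \alpha^*)$ of $(G,\alpha)$. The key ingredients are already in place: Proposition~\ref{5prop1} says $R_{(G,\alpha)} = R_{(K_G,\alpha^*)}$ as $R$-modules (so ``forms a basis'' means the same thing on both sides), and Proposition~\ref{5prop2} says $Q_G = Q_{K_G}$. Note also that the spline matrix $A$ attached to $\{F_1,\ldots,F_n\}$ depends only on the vertex labels, not on the edge labels, so the determinant $\big\vert F_1 \squad F_2 \squad \ldots \squad F_n \big\vert$ is literally the same whether we view the $F_i$ as elements of $R_{(G,\alpha)}$ or of $R_{(K_G,\alpha^*)}$.

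First I would take $\{F_1,\ldots,F_n\} \subset R_{(G,\alpha)}$ arbitrary and invoke Proposition~\ref{5prop1} to regard it as a subset of $R_{(K_G,\alpha^*)}$. Since $K_G$ is a complete graph on $n$ vertices and $R$ is a GCD domain, Theorem~\ref{4thm4} applies verbatim: $\{F_1,\ldots,F_n\}$ forms a basis for $R_{(K_G,\alpha^*)}$ if and only if $\big\vert F_1 \squad F_2 \squad \ldots \squad F_n \big\vert = r \cdot Q_{K_G}$ for some unit $r \in R$. Then I would substitute, using Proposition~\ref{5prop1}, that being a basis for $R_{(K_G,\alpha^*)}$ is the same as being a basis for $R_{(G,\alpha)}$, and using Proposition~\ref{5prop2}, that $Q_{K_G} = Q_G$. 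The chain of equivalences then reads: $\{F_1,\ldots,F_n\}$ is a basis for $R_{(G,\alpha)}$ $\iff$ $\{F_1,\ldots,F_n\}$ is a basis for $R_{(K_G,\alpha^*)}$ $\iff$ $\big\vert F_1 \squad F_2 \squad \ldots \squad F_n \big\vert = r \cdot Q_{K_G} = r \cdot Q_G$ with $r$ a unit, which is exactly the claimed statement.

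There is essentially no obstacle left at this stage; the entire difficulty of the paper has been absorbed into Theorem~\ref{4thm4} (and its supporting Corollary~\ref{4cor1} and Proposition~\ref{2prop1}) and into the two bridging propositions of this section. The only point that deserves an explicit sentence is that the determinant is unchanged under completion --- this is immediate because the spline matrix records only vertex values $F_i(v_j)$, and completion alters edges, not vertices. If I wanted to be maximally careful I would also remark that $K_G$ still has exactly $n$ vertices, so the hypothesis ``$n$ vertices'' of Theorem~\ref{4thm4} is met, and that Theorem~\ref{4thm4} requires the ambient ring to be a GCD domain, which is assumed here as well. With those remarks the proof is a two-line chain of iff's, and I would write it as such.
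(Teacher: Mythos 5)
Your proposal is correct and follows exactly the paper's own route: the paper likewise derives Theorem~\ref{5thm1} by applying Theorem~\ref{4thm4} to the completion $(K_G,\alpha^*)$ and then invoking Proposition~\ref{5prop1} and Proposition~\ref{5prop2} to identify the module and the quantity $Q_G$ with their complete-graph counterparts. Your additional remark that the spline matrix, and hence the determinant, is unaffected by completion is a sensible point of care that the paper leaves implicit.
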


%%%%%%%%%%%%%%%%%%%%%%%%%%%%%%%%%%%%%%%%%%%%%%%%%%%%%%%%%
%%%%%%%%%% Acknowledgements Section
%%%%%%%%%%%%%%%%%%%%%%%%%%%%%%%%%%%%%%%%%%%%%%%%%%%%%%%%%

\section*{Acknowledgements}

The second author is supported by the Scientific and Technological Research Council of Turkey, International Postdoctoral Research Fellowship Program for Turkish Citizens T\"{U}B\.{I}TAK-2219 (Project Number:1059B192201169). In addition, he thanks Smith College Department of Mathematical Sciences for their hospitality.

%%%%%%%%%%%%%%%%%%%%%%%%%%%%%%%%%%%%%%%%%%%%%%%%%%%%%%%%%
%%%%%%%%%% Bibliography
%%%%%%%%%%%%%%%%%%%%%%%%%%%%%%%%%%%%%%%%%%%%%%%%%%%%%%%%%

\bibliographystyle{alpha} % We choose the "plain" reference style
\bibliography{fs}{} % Entries are in the refs.bib file

\end{document}